% See "book", "report", "letter" for other types of document.

\documentclass[11pt]{article} % use larger type; default would be 10pt

\usepackage[utf8]{inputenc} % set input encoding (not needed with XeLaTeX)

%%% Examples of Article customizations
% These packages are optional, depending whether you want the features they provide.
% See the LaTeX Companion or other references for full information.

%%% PAGE DIMENSIONS
\usepackage{geometry} % to change the page dimensions
\geometry{letterpaper} % or a4paper, letterpaper (US) or a5paper or....
\geometry{left=1in,right=1in,top=1in,bottom=1in} % for example, change the margins to 2 inches all round
% \geometry{landscape} % set up the page for landscape
%   read geometry.pdf for detailed page layout information

\usepackage{graphicx} % support the \includegraphics command and options
\usepackage{color}

\definecolor{Red}{rgb}{1,0,0}
\definecolor{Blue}{rgb}{0,0,1}
\definecolor{Olive}{rgb}{0.41,0.55,0.13}
\definecolor{Green}{rgb}{0,1,0}
\definecolor{MGreen}{rgb}{0,0.8,0}
\definecolor{DGreen}{rgb}{0,0.55,0}
\definecolor{Yellow}{rgb}{1,1,0}
\definecolor{Cyan}{rgb}{0,1,1}
\definecolor{Magenta}{rgb}{1,0,1}
\definecolor{Orange}{rgb}{1,.5,0}
\definecolor{Violet}{rgb}{.5,0,.5}
\definecolor{Purple}{rgb}{.75,0,.25}
\definecolor{Brown}{rgb}{.75,.5,.25}
\definecolor{Grey}{rgb}{.5,.5,.5}

% \usepackage[parfill]{parskip} % Activate to bein paragraphs with an empty line rather than an indent

%%% PACKAGES
\usepackage{booktabs} % for much better looking tables
\usepackage{array} % for better arrays (eg matrices) in maths
\usepackage{paralist} % very flexible & customisable lists (eg. enumerate/itemize, etc.)
\usepackage{verbatim} % adds environment for commenting out blocks of text & for better verbatim
\usepackage{subfigure} % make it possible to include more than one captioned figure/table in a single float
\usepackage{amsmath,amssymb,amsthm,mathrsfs}
% These packages are all incorporated in the memoir class to one degree or another...
\usepackage[colorlinks]{hyperref}
\usepackage{blkarray}

%%% HEADERS & FOOTERS
\usepackage{fancyhdr} % This should be set AFTER setting up the page geometry
\pagestyle{plain} % options: empty , plain , fancy
%\renewcommand{\headrulewidth}{0pt} % customise the layout...
%\lhead{}\chead{}\rhead{}
%\lfoot{}\cfoot{\thepage}\rfoot{}

%%% SECTION TITLE APPEARANCE
%\usepackage{sectsty}
%\allsectionsfont{\sffamily\mdseries\upshape} % (See the fntguide.pdf for font help)
% (This matches ConTeXt defaults)

%%% ToC (table of contents) APPEARANCE
%\usepackage[nottoc,notlof,notlot]{tocbibind} % Put the bibliography in the ToC
%\usepackage[titles]{tocloft} % Alter the style of the Table of Contents
%\renewcommand{\cftsecfont}{\rmfamily\mdseries\upshape}
%\renewcommand{\cftsecpagefont}{\rmfamily\mdseries\upshape} % No bold!

%%%% definitions of math statemebt

%\newtheorem{fact}[propo]{Fact}

%This is for repeating theorems, lemmas
\makeatletter
\newtheorem*{rep@theorem}{\rep@title}
\newcommand{\newreptheorem}[2]{%
\newenvironment{rep#1}[1]{%
 \def\rep@title{#2 \ref{##1}}%
 \begin{rep@theorem}}%
 {\end{rep@theorem}}}
\makeatother
%end repeat theorems

\theoremstyle{plain}

\newtheorem{lemma}{Lemma}[section]

\newtheorem{theorem*}{Theorem}   % no section numbers
\newtheorem{lemma*}{Lemma} % no section numbers
\newtheorem{corollary*}{Corollary} % no section numbers
\newtheorem*{remark*}{Remark}
\newtheorem*{example*}{Example}

\newtheorem{remark}{Remark}[section]

% WIDEBAR COMMAND
\newlength{\widebarargwidth}
\newlength{\widebarargheight}
\newlength{\widebarargdepth}

\theoremstyle{definition}
\newtheorem{definition}{Definition}[section]

%calligraphic letters%

\def\cC{{\cal C}}
\def\cD{{\cal D}}

\def\cF{{\cal F}}
\def\cG{{\cal G}}

\def\cK{{\cal K}}

\def\cN{{\cal N}}

\allowdisplaybreaks %allows large derivations to break into 2 pages

\bibliographystyle{plain}

\begin{document}

\begin{center}

{\bf{\Large{Persistence of centrality in random growing trees}}}

\vspace*{.25in}

\begin{tabular}{ccc}
{\large{Varun Jog}} & \hspace*{.75in} & {\large{Po-Ling Loh}} \\ 
{\large{\texttt{varunjog@wharton.upenn.edu}}} & & {\large{\texttt{loh@wharton.upenn.edu}}} \vspace{.2in}
 \\
Departments of Statistics \& CIS & \hspace{.2in} & Department of Statistics \\
Warren Center for Network and Data Sciences && The Wharton School \\
University of Pennsylvania && University of Pennsylvania \\ Philadelphia, PA 19104 & & Philadelphia, PA 19104
\end{tabular}

\vspace*{.2in}

November 2015

\vspace*{.2in}

\end{center}

\begin{abstract}
We investigate properties of node centrality in random growing tree models. We focus on a measure of centrality that computes the maximum subtree size of the tree rooted at each node, with the most central node being the tree centroid. For random trees grown according to a preferential attachment model, a uniform attachment model, or a diffusion processes over a regular tree, we prove that a single node persists as the tree centroid after a finite number of steps, with probability 1. Furthermore, this persistence property generalizes to the top $K \ge 1$ nodes with respect to the same centrality measure. We also establish necessary and sufficient conditions for the size of an initial seed graph required to ensure persistence of a particular node with probability $1-\epsilon$, as a function of $\epsilon$: In the case of preferential and uniform attachment models, we derive bounds for the size of an initial hub constructed around the special node. In the case of a diffusion process over a regular tree, we derive bounds for the radius of an initial ball centered around the special node. Our necessary and sufficient conditions match up to constant factors for preferential attachment and diffusion tree models.
\end{abstract}

\section{Introduction}

Heterogeneity is a common phenomenon arising naturally in many network datasets. Although some networks exist in which connections form at random between approximately exchangeable individuals, it is usually more realistic to assume that certain nodes occupy a more favorable position in the network than others. This could arise because particular nodes possess attributes that increase their likelihood of connectivity in relation to other nodes in the network. It could also be due to strategic network formation, which---even when nodes are indistinguishable---may settle on an equilibrium position where one node is in a more powerful position than the others, due to tradeoffs between the cost and utility of maintaining pairwise connections~\cite{JacWol96}. A third possibility is that the network is formed over a period of time, and older nodes are more likely to possess a higher degree of connectivity than newer nodes in the network. In order to quantify the amount of heterogeneity present in a network, various summary statistics have been proposed, including degree distributions, average path lengths, clustering coefficients, and different measures of centrality~\cite{Jack08}.

The Bar\'abasi-Albert model, also known as the preferential attachment model, is one popular probabilistic framework for modeling the dynamics of a random growing graph~\cite{BarAlb99}. In this model, each new node connects to existing nodes with probability proportional to the degrees of the nodes in the previous time step. Galashin~\cite{Gal14} recently showed that with probability 1, a single node emerges as a persistent hub in a preferential attachment network, meaning it remains the highest-degree node in the graph after a finite number of time steps. In contrast, such a phenomenon does not occur for the uniform attachment model, in which each new node connects to existing nodes uniformly at random. Intuitively, this is due to the fact that newly created nodes have a relatively high probability of replacing the current node of highest degree after the graph evolves further. Although older nodes in a uniform attachment model may not have a substantial lead in terms of degree, it is nonetheless reasonable to expect older nodes in the network to exhibit a higher level of connectivity according to some suitable measure. We confirm this intuition by tracking the dynamics of a different summary statistic, the \emph{centrality} of a node in a random growing network, and prove that a single persistent node of highest centrality emerges almost surely in the case of uniform attachment trees, preferential attachment trees, and another related random growing tree arising from a diffusion process over a $d$-regular tree.

Numerous notions of centrality have been introduced in the literature on social networks, including degree centrality (also known as the maximum degree), distance centrality, betweenness centrality, and eigenvalue centrality (see, e.g., \cite{Bor05, Jack08}). In the case of trees, many popular notions of centrality conveniently coincide in terms of the most central node, which we will refer to as the tree centroid. The notion of a tree centroid was first introduced by Jordan \cite{Jor1869}, where it was originally called the branch weight centroid, and was subsequently studied by various authors using equivalent characterizations such as the distance center \cite{Jack08}, rumor center \cite{ShahEtAl11}, median vertex of a graph \cite{Zel68}, security center, \cite{Sla75}, accretion center \cite{Sla81}, and telephone center \cite{Mit78}. In a random growing tree, we will call a vertex a ``persistent centroid" if it is the tree centroid for all but finite moments in time. Our first main contribution therefore establishes the existence of a persistent centroid in each of the random growth models described above.

Centrality measures in random graphs have also been analyzed recently in the probability theory literature for devising root-finding algorithms in growing networks~\cite{BubEtal14, KhiLoh15}. In such settings, selecting the top $K$ nodes with respect to an appropriate centrality measure yields a confidence set for the initial node in the random graph, where $K$ is only required to be a function of the error probability, and not the total number of nodes. Motivated by these findings, our next contribution is to generalize the result on the persistence of a single centroid to the case of the top $K$ central nodes. Consequently, the confidence set generated by a root-finding algorithm based on this measure of centrality is guaranteed to stabilize after a finite amount of time, which is a desirable property from the point of view of robustness.

As a final contribution, we address the following natural question: Suppose an individual wants to ensure that he or she is the persistent centroid of the network. The individual may boost his or her probability of becoming the persistent centroid by creating a large number of initial links to other nodes (i.e., forming a large ``seed hub," for which it is the center node). How large should the initial hub be in order to ensure that the individual becomes the unique persistent centroid, with probability $1-\epsilon$? We answer this question for each of the random growing tree models. In the case of preferential and uniform attachment, we establish necessary and sufficient conditions for the initial hub size $k$. In a $d$-regular tree, we instead surround the first individual with a seed graph consisting of all nodes within radius $r$ of that node, and derive necessary and sufficient conditions for the size of $r$. Our results are summarized in the following table:
\begin{center}
\begin{tabular}{|c||c|c|}
\hline
\textbf{model} & \textbf{necessary condition} & \textbf{sufficient condition} \\
\hline \hline
preferential attachment & $k \ge c \log(1/\epsilon)$ & $k = C \log(1/\epsilon)$ \\
\hline
uniform attachment & $k \ge c' \frac{\log (1/\epsilon)}{\log \log(1/\epsilon)}$ & $k = C' \log(1/\epsilon)$ \\
\hline
$d$-regular diffusion & $r \ge c'' \log\log(1/\epsilon)$ & $r = C'' \log\log(1/\epsilon)$ \\
\hline
\end{tabular}
\end{center}
Note that the necessary and sufficient conditions match up to constant factors for preferential attachment and $d$-regular diffusion trees, implying the existence of a threshold at $k = \Theta\left(\log(1/\epsilon)\right)$ and $r = \Theta\left(\log\log(1/\epsilon)\right)$, respectively. In the case of uniform attachment, our bounds differ by a factor of $\log\log(1/\epsilon)$.

The remainder of the paper is organized as follows: We begin in Section~\ref{SecPrelim} by defining tree centrality and establishing basic properties of centroids. We also define the random growth models that we will discuss in the paper. In Section~\ref{SecPersist1}, we establish persistence of a unique centroid for each of the random growing tree models, with probability 1, and then extend the result to the set of top $K$ central nodes in Section~\ref{SecPersistK}. In Section~\ref{SecHub}, we explore the problem of ensuring persistent centrality of the root node by initializing the random growth model by an appropriate seed tree. We establish upper and lower bounds on the size of the initial seed as a function of the error probability of persistence. We conclude in Section~\ref{SecDiscussion} by discussing several interesting open problems.

%%%%%%

\section{Preliminaries}
\label{SecPrelim}

We begin by introducing the notion of centrality in trees, as well as the probabilistic models of random growing trees that we will study in this paper.

\subsection{Centrality}

Let the set of vertices of a tree $T$ be denoted by $V(T)$. A rooted tree is denoted by $(T, u)$, with $u \in V(T)$. The subtree starting from $v$ in a rooted tree $T$ is denoted by $T_{v \downarrow}$. Define the function $\psi_T: V(T) \to \mathbb N$ by
\begin{equation}
\psi_T(u) = \max_{v \in V(T) \setminus \{u\}} |(T, u)_{v\downarrow}|.
\end{equation}
Thus, $\psi_T(u)$ is size of the largest subtree of the rooted tree $(T, u)$. 
\begin{definition}
Given a tree $T$, a vertex $u \in V(T)$ is called a \emph{centroid} if 
$$\psi_T(u) \leq \psi_T(v), \quad \text{ for all } v \in V(T).$$
For any two nodes $u$ and $v$, if  $\psi_T(u) \leq \psi_T(v)$, we say that $u$ is \emph{at least as central as} $v$.
\end{definition}

The first lemma provides a characterization of tree centroids. Similar results have been discovered and rediscovered in a number of papers \cite{Knuth73, Kang75, ShahEtAl11, Zel68}. We include a proof here for completeness.

\begin{lemma}\label{lemma: shah}
For a tree $T$ on $n$ vertices, the following statements hold:
\begin{itemize}
\item[(i)] If $v^*$ is a centroid, then
$$\psi_T(v^*) \leq \frac{n}{2}.$$
\item[(ii)] $T$ can have at most two centroids.
\item[(iii)] If $u^*$ and $v^*$ are two centroids, then $u^*$ and $v^*$ are adjacent vertices. Furthermore,
\begin{equation*}
\psi_T(u^*) = |(T, u^*)_{v^*\downarrow}|, \quad \text{and} \quad \psi_T(v^*) = |(T, v^*)_{u^*\downarrow}|.
\end{equation*}
\end{itemize}
\end{lemma}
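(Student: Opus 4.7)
My plan is to prove (i) first, then to establish the adjacency and the size formula of (iii) together, and finally to obtain (ii) as an immediate corollary of (iii).

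For (i), I argue by contradiction via a ``re-rooting swap.'' Suppose $\psi_T(v^*)>n/2$, and let $w$ be a neighbor of $v^*$ attaining this maximum, so that $|(T,v^*)_{w\downarrow}|=\psi_T(v^*)>n/2$. I will show that $w$ is strictly more central than $v^*$, contradicting the defining property of the centroid. Using the basic edge-split identity (deleting the edge $\{v^*,w\}$ creates two components whose sizes sum to $n$), the subtree of $w$ on the $v^*$-side has size at most $n-\psi_T(v^*)<n/2<\psi_T(v^*)$. Every other subtree $(T,w)_{z\downarrow}$ at a neighbor $z\neq v^*$ of $w$ lies inside $(T,v^*)_{w\downarrow}\setminus\{w\}$, so its size is strictly less than $\psi_T(v^*)$. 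Hence $\psi_T(w)<\psi_T(v^*)$, the desired contradiction.

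For (iii), suppose $u^*\neq v^*$ are both centroids, and let $u^*=x_0,x_1,\dots,x_k=v^*$ be the unique path between them. The heart of the argument is the set inclusion
\[
S_u \;:=\; V(T)\setminus (T,u^*)_{x_1\downarrow} \;\subseteq\; (T,v^*)_{x_{k-1}\downarrow},
\]
which I verify directly: every vertex $z\in S_u$ is separated from $x_1$ by the edge $\{u^*,x_1\}$, so the path from $z$ to $v^*$ must traverse $u^*,x_1,\dots,x_{k-1}$, placing $z$ in the subtree of $v^*$ rooted at $x_{k-1}$. Combining this inclusion with part (i) applied to each centroid gives
\[
n/2 \;\le\; |S_u| \;\le\; \bigl|(T,v^*)_{x_{k-1}\downarrow}\bigr| \;\le\; n/2,
\]
so equality holds throughout and $S_u=(T,v^*)_{x_{k-1}\downarrow}$ as sets. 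If $k\ge 2$, however, $x_1$ belongs to the right-hand set but not to $S_u$, a contradiction. Hence $k=1$, meaning $u^*$ and $v^*$ are adjacent. The size formulas in (iii) then fall out from the edge-split identity applied to $\{u^*,v^*\}$: since $|(T,u^*)_{v^*\downarrow}|$ and $|(T,v^*)_{u^*\downarrow}|$ are both at most $n/2$ by (i) and sum to $n$, each must equal $n/2$, and each therefore realizes the maximum in the definition of $\psi_T$ at its respective centroid.

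Part (ii) is then immediate: three distinct centroids would be pairwise adjacent by (iii), forming a triangle, which is impossible in a tree. The one delicate point is ensuring that the inclusion $S_u\subseteq(T,v^*)_{x_{k-1}\downarrow}$ is \emph{strict} when $k\ge 2$; this requires careful bookkeeping of which path vertices lie in each subtree under the convention for $(T,u)_{v\downarrow}$. Once that strictness is isolated, the remainder of the argument is routine.
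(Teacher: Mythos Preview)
Your proof is correct. Part (i) is essentially the paper's argument recast as a contradiction: both look at the neighbor $w$ realizing the maximum subtree and compare $\psi_T(w)$ with $\psi_T(v^*)$, using the complementary edge-split sizes.

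For (ii) and (iii) you take a genuinely different route. The paper fixes one centroid $v^*$, orders the subtree sizes $r_1\ge\cdots\ge r_k$ of its neighbors, and argues directly that no vertex in $\bigcup_{i\ge 2}(T,v^*)_{a_i\downarrow}$ nor any vertex of $(T,v^*)_{a_1\downarrow}$ other than $a_1$ can match $v^*$, yielding (ii) first and (iii) as a byproduct. You instead take an arbitrary second centroid $u^*$, run the path $u^*=x_0,\dots,x_k=v^*$, and use the set inclusion $S_u\subseteq (T,v^*)_{x_{k-1}\downarrow}$ together with the size squeeze $n/2\le |S_u|\le |(T,v^*)_{x_{k-1}\downarrow}|\le n/2$ to force equality of sets, which is impossible for $k\ge 2$ because $x_1$ lies in the right-hand set but not in $S_u$. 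This gives adjacency first and then (ii) via the ``no triangles in a tree'' observation. Your organization is a little cleaner when one only cares about adjacency; the paper's argument gives slightly more information (it pinpoints \emph{which} neighbor can be the second centroid and when). One small wording quibble: in your concluding remark, the point is not that the inclusion is \emph{strict} for $k\ge 2$, but that the forced \emph{equality} of sets is violated by the witness $x_1$; your body text has this right.
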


\begin{proof}
It is easy to check that the results hold for $n=2$, so we assume that $n \geq 3$ for the rest of the proof. Let $v^*$ be a centroid of $T$. Let the neighbors of $v^*$ be the vertices $\{a_1, \dots, a_k\}$. Note that if $k=1$, then $\psi_T (v^*) = n-1$, and one can check that $\psi_T(a_1) < n-1$. This contradicts the assumption that $v^*$ is a centroid. Hence, we must have $k \geq 2$. Denote
\begin{align*}
|(T, v^*)_{a_i \downarrow}| = r_i, \quad \text{for} \quad 1\leq i\leq k.
\end{align*}
Without loss of generality, assume $r_1 \geq r_2 \geq \dots \geq r_k$. Thus, we have $\psi_T(v^*) = r_1$. The key step is to look at the subtrees of $(T, a_1)$. If $b \neq v^*$ is any neighbor of $a_1$, we have $(T, a_1)_{b \downarrow} \subset (T, v^*)_{a_1\downarrow}$. Thus, $|(T, a_1)_{b \downarrow} | < r_1$. Therefore, to ensure that $\psi_T(v^*) \leq \psi_T(a_1)$, we must have
\begin{equation*}
|(T, a_1)_{v^*\downarrow}| \geq r_1,
\end{equation*}
which simplifies to
\begin{equation}\label{eq: r}
1 + \sum_{i=2}^k r_i \geq r_1.
\end{equation}
Adding $r_1$ to both sides, and noting that $\sum_{i=1}^k r_i +1 = n$, we conclude that $r_1 \leq n/2$, which is part (i).

To show part (ii), note that none the vertices in the set $\cup_{i=2}^k (T, v^*)_{a_i \downarrow}$ can be centroids, since for any $u \in \cup_{i=2}^k (T, v^*)_{a_i \downarrow}$, we have
\begin{equation*}
\psi_T(u) > |(T, u)_{a_1 \downarrow}| = |(T, v^*)_{a_1 \downarrow}| = \psi_T(v^*).
\end{equation*}
Thus, any centroids apart from $v^*$ must lie in $(T, v^*)_{a_1 \downarrow}$. For any node $u \in (T, v^*)_{a_1 \downarrow}$ such that $u \neq a_1$, we have
\begin{equation*}
\psi_T(u) > |(T, u)_{v^*\downarrow}| = 1 + \sum_{i=2}^k r_i \geq r_1,
\end{equation*}
where the second inequality follows from inequality \eqref{eq: r}. Thus, the only potential centroid apart from $v^*$ is the node $a_1$, which proves part (ii). Note that $a_1$ can be a centroid if and only if
\begin{equation*}
\psi_T(a_1) = |(T, a_1)_{v^*\downarrow}| = 1+ \sum_{i=2}^k r_i = r_1 = |(T, v^*)_{a_1\downarrow}| = \psi_T(v^*).
\end{equation*}
This proves part (iii) and concludes the proof.
\end{proof}

We now turn our attention to growing trees. We have the following definition:

\begin{definition}
A collection of trees $\{T_n\}_{n \ge 1}$ is called a \emph{sequence of growing trees} if $T_n$ has $n$ nodes, and $T_{n+1}$ is obtained from $T_n$ by adding a single vertex that is attached to a vertex of $T_n$ by a single new edge.
\end{definition}

The next lemma concerns the evolution of centroids in sequences of growing trees.

\begin{lemma}\label{lemma: main} %Lemma%
Consider a sequence of growing trees $\{T_n\}_{n \ge 1}$, with vertices labeled in order of appearance, so $V(T_n) = \{v_1, v_2, \dots, v_n\}$. Let $v^*(n)$ be a centroid of $T_n$, and let $n > 2$. If at some time $N > n$, the node $v_{n+1}$ becomes at least as central as $v^*(n)$; i.e., if 
$$\psi_{T_N}(v_{n+1}) \leq \psi_{T_N}(v^*(n)),$$
then for some $n +1 \leq M \leq N$, we must have
\begin{equation}
\label{EqnHibernate}
\psi_{T_M}(v^*(n)) = \psi_{T_M}(v_{n+1}),
\end{equation}
and
\begin{equation}
\label{EqnZop}
|(T_M, v^*(n))_{v_{n+1}\downarrow}| = |(T_M, v_{n+1})_{v^*(n)\downarrow}|.
\end{equation}
\end{lemma}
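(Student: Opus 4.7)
The plan is to apply a discrete intermediate value argument to the difference $\Delta(m) := \psi_{T_m}(v_{n+1}) - \psi_{T_m}(v^*(n))$ for $m \in [n+1, N]$. At $m = n+1$ the vertex $v_{n+1}$ is a leaf, so $\psi_{T_{n+1}}(v_{n+1}) = n$; meanwhile, Lemma~\ref{lemma: shah}(i) applied to $T_n$, together with the fact that appending a single vertex grows each $\psi$-value by at most $1$, yields $\psi_{T_{n+1}}(v^*(n)) \leq n/2 + 1$. Thus $\Delta(n+1) \geq n/2 - 1$, which is at least $1$ for $n \geq 3$ by integrality of $\psi$, while $\Delta(N) \leq 0$ by hypothesis. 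Since a newly added vertex lies in exactly one subtree hanging off any fixed root, each individual $\psi$-value changes by $0$ or $1$ per step, so $\Delta(m+1) - \Delta(m) \in \{-1, 0, 1\}$. Consequently $\Delta$ must hit $0$ at some $M \in [n+1, N]$, establishing~\eqref{EqnHibernate}.

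Fix such an $M$, and let $a$ be the neighbor of $v^*(n)$ on the unique path in $T_M$ from $v^*(n)$ to $v_{n+1}$, and $b$ the corresponding neighbor of $v_{n+1}$ on that path (so that $a = v_{n+1}$ and $b = v^*(n)$ in the adjacent case). The heart of the argument is to show that the maximum subtree hanging off $v^*(n)$ is the one pointing toward $v_{n+1}$, namely $\psi_{T_M}(v^*(n)) = |(T_M, v^*(n))_{a\downarrow}|$. Suppose for contradiction that the maximum were instead $(T_M, v^*(n))_{a'\downarrow}$ for some neighbor $a' \neq a$. Every vertex of $(T_M, v^*(n))_{a'\downarrow}$, as well as $v^*(n)$ itself, has its path to $v_{n+1}$ passing through $b$, and hence $\{v^*(n)\} \cup (T_M, v^*(n))_{a'\downarrow} \subseteq (T_M, v_{n+1})_{b\downarrow}$. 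Since this union is disjoint, we obtain
\[
\psi_{T_M}(v_{n+1}) \geq |(T_M, v_{n+1})_{b\downarrow}| \geq |(T_M, v^*(n))_{a'\downarrow}| + 1 = \psi_{T_M}(v^*(n)) + 1,
\]
contradicting~\eqref{EqnHibernate}. A symmetric argument yields $\psi_{T_M}(v_{n+1}) = |(T_M, v_{n+1})_{b\downarrow}|$, so by~\eqref{EqnHibernate} we conclude $|(T_M, v^*(n))_{a\downarrow}| = |(T_M, v_{n+1})_{b\downarrow}|$.

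Finally, deleting the edge $(v_{n+1}, b)$ partitions $V(T_M)$ into the component of $v^*(n)$, of size $|(T_M, v_{n+1})_{b\downarrow}|$, and the component of $v_{n+1}$, of size $|(T_M, v^*(n))_{v_{n+1}\downarrow}|$, and these sum to $M$; symmetrically, $|(T_M, v^*(n))_{a\downarrow}| + |(T_M, v_{n+1})_{v^*(n)\downarrow}| = M$. Combined with the equality of the previous paragraph, these give $|(T_M, v^*(n))_{v_{n+1}\downarrow}| = |(T_M, v_{n+1})_{v^*(n)\downarrow}|$, which is~\eqref{EqnZop}. The main obstacle is the contradiction step above: the discrete intermediate value argument by itself only forces a tie in $\psi$-values at time $M$, and it is the tree-structural observation---that any ``wrongly-oriented'' dominant subtree of either node would inflate the other node's maximum subtree---that actually secures the balanced split of subtree sizes demanded by~\eqref{EqnZop}.
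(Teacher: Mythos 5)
Your proof is correct and follows essentially the same route as the paper's: a discrete intermediate-value argument on $\psi_{T_m}(v_{n+1}) - \psi_{T_m}(v^*(n))$ for~\eqref{EqnHibernate}, then a contradiction showing each node's maximal subtree must point toward the other, and finally the subtree-size identity~\eqref{EqnZop}. The only difference is cosmetic: you spell out the last step (the edge-deletion/complement count giving $|(T_M, v^*(n))_{a\downarrow}| + |(T_M, v_{n+1})_{v^*(n)\downarrow}| = M$), which the paper leaves as ``easy to see,'' and your contradiction uses the single extra vertex $v^*(n)$ where the paper counts all $\ell+1$ path vertices.
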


\begin{proof} %Proof of Lemma%
Note that for any fixed vertex $v$, the size of the largest subtree of $(T_n, v)$ either increases by 1 or remains constant when the new vertex $v_{n+1}$ joins $T_n$. Thus, $\psi_{T_n}(v)$ increases by at most 1 at for each time step. At time $n+1$, we have $\psi_{T_{n+1}}(v_{n+1}) = n > \psi_{T_{n+1}}(v^*(n))$, where the inequality follows from Lemma~\ref{lemma: shah}. Note that the difference $\psi_{T}(v_{n+1}) - \psi_{T}(v^*(n))$ changes by at most 1 as the tree $T$ evolves at each time step. Hence, if the difference becomes nonpositive at some time $n = N$, there must exist a time $M \leq N$ when the difference is exactly zero. This implies that there exists an $M$ such that $k+1\leq M \leq N$, so equation~\eqref{EqnHibernate} holds.

Now consider the subtrees of $(T_M, v^*(n))$ and $(T_M, v_{n+1})$. Let $(v^*(n), u_1, u_2, \dots, u_\ell, v_{n+1})$ denote the path from $v^*(n)$ to $v_{n+1}$, where $\ell \ge 0$. Suppose the largest subtree of $(T_M, v_{n+1})$ is $(T_M, v_{n+1})_{w\downarrow}$, for some $w \neq u_\ell$.  It is easy to see that
\begin{align*}
\psi_{T_M}(v^*(n)) &\geq |(T_M, v^*(n))_{u_1 \downarrow} |\\
&\geq \ell + 1 + |(T_M, v^*(n))_{w\downarrow}| \\
&=  \ell + 1 + |(T_M, v_{n+1})_{w\downarrow}| \\
&= \ell + 1 + \psi_{T_M}(v_{n+1}),
\end{align*}
which contradicts equation~\eqref{EqnHibernate}. Thus, the largest subtree of $(T_M, v_{n+1})$ must be $(T_M, v_{n+1})_{u_\ell \downarrow}$. Using the same argument for $v^*$, we conclude that the largest subtree of $(T_M, v^*(n))$ must be $(T_M, v^*(n))_{u_1 \downarrow}$. By equation~\eqref{EqnHibernate}, we then have $|(T_M, v_{n+1})_{u_\ell \downarrow}| = |(T_M, v^*(n))_{u_1 \downarrow}|$. It is then easy to see that equation~\eqref{EqnZop} holds, as well.
\end{proof}

We also have the following useful result:

\begin{lemma}\label{lemma: n/2}
Let $\{T_n\}_{n \ge 1}$ be a sequence of growing trees, with $V(T_n) = \{v_1, \dots, v_n\}$. At time $n+1$, we have the inequality
\begin{align*}
|(T_{n+1}, v_{n+1})_{v^*(n)\downarrow}| \geq \frac{n}{2}.
\end{align*}
\end{lemma}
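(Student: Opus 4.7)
The plan is to analyze $(T_{n+1}, v_{n+1})_{v^*(n)\downarrow}$ directly by examining how the new leaf $v_{n+1}$ attaches to $T_n$. Let $w$ denote the unique neighbor of $v_{n+1}$ in $T_{n+1}$; by definition of a growing tree, $w \in V(T_n)$. I would split into two cases according to whether $w = v^*(n)$.

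First I would dispose of the trivial case $w = v^*(n)$: rooting $T_{n+1}$ at $v_{n+1}$ makes $v^*(n)$ the unique child of $v_{n+1}$, and the subtree hanging below $v^*(n)$ is all of $T_n$. Hence $|(T_{n+1}, v_{n+1})_{v^*(n)\downarrow}| = n \geq n/2$, which trivially satisfies the bound.

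For the main case $w \neq v^*(n)$, let $a_1, \dots, a_k$ be the neighbors of $v^*(n)$ in $T_n$, and let $a_1$ denote the (unique) one lying on the path from $v^*(n)$ to $w$. Write $r_i := |(T_n, v^*(n))_{a_i\downarrow}|$. The key observation is that re-rooting $T_{n+1}$ at $v_{n+1}$ promotes $a_1$ to the parent of $v^*(n)$, so that
\begin{equation*}
(T_{n+1}, v_{n+1})_{v^*(n)\downarrow} \;=\; V(T_n) \setminus (T_n, v^*(n))_{a_1\downarrow},
\end{equation*}
which has size $n - r_1$. Applying Lemma~\ref{lemma: shah}(i) gives $r_1 \leq \psi_{T_n}(v^*(n)) \leq n/2$, so $n - r_1 \geq n/2$, as required.

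There is no serious obstacle here: once one correctly identifies the effect of re-rooting at the newly added leaf on the subtree decomposition at $v^*(n)$, the bound follows immediately from the centroid inequality in Lemma~\ref{lemma: shah}(i). The only subtlety is the bookkeeping separation of the case $w = v^*(n)$, in which $v^*(n)$ has no parent in the re-rooted tree and the entire $T_n$ appears as a single subtree beneath it.
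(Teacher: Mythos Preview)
Your proof is correct and follows essentially the same route as the paper: both identify the subtree $(T_{n+1}, v_{n+1})_{v^*(n)\downarrow}$ as the complement of the branch of $T_{n+1}$ at $v^*(n)$ pointing toward $v_{n+1}$, and then invoke the centroid bound $\psi_{T_n}(v^*(n)) \le n/2$ from Lemma~\ref{lemma: shah}(i). Your explicit case split on whether $v_{n+1}$ attaches directly to $v^*(n)$ is a bit more careful than the paper's version, which tacitly folds that case into its path notation, but the substance of the argument is the same.
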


\begin{proof}
As before, let $(v^*(n), u_1, u_2, \dots, u_\ell, v_{n+1})$ denote the path from $v^*(n)$ to $v_{n+1}$. We have the equality
\begin{equation*}
|(T_{n+1}, v_{n+1})_{v^*(n)\downarrow}| = (n+1) - |(T_{n+1}, v^*(n))_{u_1 \downarrow}|.
\end{equation*}
From Lemma \ref{lemma: shah}, we have
\begin{equation*}
|(T_{n+1}, v^*(n))_{u_1 \downarrow}| \leq 1 + \psi_{T_{n}}(v^*(n)) \leq 1 + \frac{n}{2}.
\end{equation*}
Substituting, we arrive at
\begin{align*}
|(T_{n+1}, v_{n+1})_{v^*(n)\downarrow}| = (n+1) - |(T_{n+1}, v^*(n))_{u_1 \downarrow}| \geq (n+1) - \frac{n}{2} - 1 = \frac{n}{2}.
\end{align*}
\end{proof}

%%%%%%

\subsection{Random growing trees}

We now describe the probabilistic models generating the sequences of growing trees to be considered in this paper. Accordingly, we have the following definitions:

\begin{definition} [Uniform attachment]
A sequence of growing trees $\{T_n\}_{n \geq 1}$ is generated by a \emph{uniform attachment process} if
\begin{itemize}
\item[(a)] $T_1$ consists of a single vertex $v_1$, and
\item[(b)] $T_{n+1}$ is created from $T_n$ by introducing a new vertex $v_{n+1}$ and attaching it to a vertex in $T_n$ uniformly at random; i.e., with probability $1/n$ to each existing node.
\end{itemize}
\end{definition}

\begin{definition} [Preferential attachment]
A sequence of growing trees $\{T_n\}_{n \geq 1}$ is generated by a \emph{preferential attachment process} if
\begin{itemize}
\item[(a)] $T_1$ consists of a single vertex $v_1$, and
\item[(b)] $T_{n+1}$ is created from $T_n$ by introducing a new vertex $v_{n+1}$ and attaching it to a random vertex in $T_n$, with probability $\frac{\text{deg}(v_i)}{\sum_{j=1}^n \text{deg}(v_j)}$ for vertex $v_i \in V(T_n)$.
\end{itemize}
\end{definition}

\begin{definition} [$d$-regular tree diffusion]
For $d \geq 2$, let $\cG$ be an infinite $d$-regular tree; i.e., a tree where each vertex has degree $d$. A sequence of growing trees $\{T_n\}_{n \ge 1}$ is generated by a \emph{$d$-regular diffusion process} if
\begin{itemize}
\item[(a)] $T_1$ consists of a single vertex $v_1 \in \cG$, and
\item [(b)] if $\cN(T_n)$ denotes the set of neighbors of vertices in $T_n$ not contained in $V(T_n)$, the tree $T_{n+1}$ is created from $T_n$ by picking $v_{n+1} \in \cN(T_n)$ uniformly at random, and adding it to $T_n$ together with its connecting edge.
\end{itemize}
\end{definition}

The models described above are well-studied~\cite{Bai75, BarAlb99, Jack08} and are also examples of plane-oriented recursive trees~\cite{Drm09}.

%%%%%%

\section{Existence of a persistent centroid}
\label{SecPersist1}

In this section, we show that with probability 1, a single centroid emerges for each sequence of random growing trees described in the previous section. We have the following definition:

\begin{definition}
A vertex $v^* \in \cup_{n=1}^\infty V(T_n)$ is a \emph{persistent centroid} for the sequence of growing trees $\{T_n\}_{n \ge 1}$ if there exists $N$ such that for all $n \ge N$, the vertex $v^*$ is a centroid of $T_n$.
\end{definition}

For a tree $T_n$ on $n$ vertices, let $\cC(T_n)$ denote the set of centroids of $T_n$. Note that by Lemma \ref{lemma: shah}, we have $|\cC(T_n)| \in \{1, 2\}$. Define 
\begin{equation*}
\cC_\text{tot} = \cup_{n = 1}^\infty \cC(T_n),
\end{equation*}
so $\cC_\text{tot}$ is the set of all vertices that are centroids at any point in time.

\begin{remark}
\label{RemD2}
Throughout this section, we will assume that $d \ge 3$ in the case of $d$-regular trees. Indeed, for $d=2$, the set $\cC_{\text{tot}}$ is infinite with probability 1. This is because diffusion on a 2-regular tree produces a sequence of line graphs, so the midpoint is the unique centroid if the number of vertices is odd, and the middle two nodes constitute the centroid set if the number of vertices is even. Since the number of vertices alternates between odd and even, a unique centroid cannot exist. Moreover, it is impossible for any node $v$ to be a centroid for all but finitely many time steps: If $v$ becomes the centroid at some time $N$, it will be a centroid at time $n \ge N$ if and only if the number of additional nodes added to the left of $v$ differs from the number of additional nodes added to the right of $v$ by at most 1. Since nodes are added to the left or to the right with equal probability, it follows from properties of a simple random walk that with probability 1, centrality cannot persist.
\end{remark}

We first show that the total number of vertices that have ever been centroids is finite with probability 1.

\begin{lemma}\label{lemma: finite}
For the preferential and uniform attachment models, and for the $d$-regular diffusion tree with $d \ge 3$, we have $|\cC_\text{tot}| < \infty$, with probability 1.
\end{lemma}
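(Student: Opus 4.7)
The plan is to show that for each index $m$, the probability that vertex $v_m$ is ever a centroid of the growing tree decays exponentially in $m$, and then apply the first Borel--Cantelli lemma to conclude $|\cC_\text{tot}| < \infty$ almost surely.

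Fix $m \geq 3$ and let $u$ denote the unique neighbor of $v_m$ in $T_m$. For every $t \geq m$, removing the edge $(u, v_m)$ partitions $T_t$ into two connected components of sizes $S^+(t)$ (containing $v_m$) and $S^-(t)$ (containing $u$), so that $S^+(m) = 1$, $S^-(m) = m - 1$, and $S^+(t) + S^-(t) = t$. By Lemma~\ref{lemma: shah}(i), if $v_m$ is a centroid of $T_t$ then every subtree at $v_m$, including the one through $u$ of size $S^-(t)$, has size at most $t/2$, forcing $S^+(t) \geq t/2$. It therefore suffices to show that $P\bigl(\exists\, t \geq m : S^+(t) \geq t/2\bigr)$ is summable in $m$.

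In each of the three models, $(S^+(t), S^-(t))$ evolves as a two-color generalized P\'olya urn. For uniform attachment, a new vertex joins the $+$-side with probability $S^+/t$, giving the classical P\'olya urn starting at $(1, m-1)$. For preferential attachment, the probabilities are proportional to the degree sums $D^\pm = 2 S^\pm - 1$, yielding a reinforcement-$2$ urn on $(D^+, D^-)$ starting at $(1, 2m - 3)$, and $D^+ \geq D^-$ is equivalent to $S^+ \geq S^-$. For the $d$-regular diffusion with $d \geq 3$, the probabilities are proportional to the boundary sizes $B^\pm = S^\pm(d - 2) + 1$, giving a reinforcement-$(d-2)$ urn starting at $(d - 1, (m - 1)(d - 2) + 1)$, with $B^+ \geq B^-$ equivalent to $S^+ \geq S^-$. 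In each case, by de~Finetti's representation for exchangeable P\'olya urns, the process is a mixture of i.i.d.\ Bernoulli$(\theta)$ draws with $\theta$ sampled from a Beta prior whose shape parameters grow linearly in $m$. Conditional on $\theta$, the walk $S^+(t) - S^-(t)$ is a biased $\pm 1$ random walk that must close a gap of size $\Theta(m)$; standard gambler's ruin gives hitting probability $1$ when $\theta \geq 1/2$ and $(\theta/(1-\theta))^{\Theta(m)}$ when $\theta < 1/2$. Integrating against the Beta prior yields $P\bigl(\exists\, t : S^+(t) \geq t/2\bigr) = O(2^{-m})$, summable in $m$, and Borel--Cantelli then gives $|\cC_\text{tot}| < \infty$ almost surely.

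The main technical obstacle is the sharpness of this tail bound: naive application of Doob's maximal inequality to the nonnegative martingale $t \mapsto S^+(t)/t$ yields only $P\bigl(\sup_t S^+(t)/t \geq 1/2\bigr) = O(1/m)$, which is not summable. The de~Finetti / gambler's-ruin decomposition sharpens this by exploiting the strong negative drift of the walk when $\theta < 1/2$, which the maximal inequality ignores. A secondary technical point is the uniform handling of reinforcement-$c$ urns for $c > 1$ in the preferential attachment and $d$-regular diffusion cases, which is covered by the standard theory for P\'olya urns with non-unit reinforcement (the limiting proportion starting at $(a, b)$ with reinforcement $c$ is $\text{Beta}(a/c, b/c)$).
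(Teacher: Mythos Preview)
Your approach is correct and genuinely different from the paper's. Both arguments reduce to bounding the probability that a P\'olya-urn-type random walk, started far below the diagonal, ever reaches it; but the two proofs differ in how they set up the walk and how they bound the hitting probability.

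The paper compares the new vertex $v_{k+1}$ against the \emph{current centroid} $v^*(k)$, invoking Lemma~\ref{lemma: main} to show that if $v_{k+1}$ ever becomes at least as central as $v^*(k)$, the pair of subtree sizes $\bigl(|(T_n,v^*(k))_{v_{k+1}\downarrow}|,\,|(T_n,v_{k+1})_{v^*(k)\downarrow}|\bigr)$ must meet the diagonal; the starting gap is only $\ge k/2$ (via Lemma~\ref{lemma: n/2}), and the diagonal-hitting probability is then controlled by a path-counting argument (Lemma~\ref{lemma: galashin}). Your route is more direct: you cut the single edge from $v_m$ to its parent, obtain the cleaner starting gap $m-2$, and avoid Lemma~\ref{lemma: main} entirely by using Lemma~\ref{lemma: shah}(i) alone. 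For the hitting bound you replace the combinatorial Lemma~\ref{lemma: galashin} with de~Finetti exchangeability plus gambler's ruin, which is arguably more transparent and probabilistic. What you give up is reusability: the paper's Lemma~\ref{lemma: galashin} is later applied verbatim to prove persistence of the top $K$ central nodes (Lemma~\ref{LemFiniteK}), where the comparison vertex is $\nu_i(n)$ rather than the centroid and the starting gap is only $(1-M)n$; your edge-cut reduction does not obviously adapt to that setting.

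Two small points of precision. First, only the \emph{second} Beta shape parameter grows linearly in $m$; the first is a fixed constant ($1$, $1/2$, or $(d-1)/(d-2)$) in the three models. Second, the integral against the Beta prior is in general $O\!\bigl(m^{a-1}2^{-m}\bigr)$ for first shape parameter $a$, so for the $d$-regular case ($a=(d-1)/(d-2)>1$) there is a polynomial prefactor; your $O(2^{-m})$ is slightly loose, though of course still summable.
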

\begin{proof}
We aim to show that any node joining the tree ``sufficiently late" has a very small chance of becoming a centroid at some future time. We first explain how Lemma \ref{lemma: main} may be leveraged to substantially simplify the proof of this fact. 

Let $v^*(k)$ be a centroid of $T_k$. Suppose the node $v_{k+1}$, which joins $T_k$ at time $k+1$, becomes a centroid of $T_{N}$ for some large enough $N$. Then $v_{k+1}$ must be at least as central as $v^*(k)$ at time $N$. Consider the evolution of the vector $(\psi_{T_n}(v_{k+1}), \psi_{T_n}(v^*(k)))$ with $n$. At time $n = k+1$, this vector is equal to $(k, \psi_{T_{k+1}}(v^*(k)))$, which is a point below the diagonal in $\mathbb N \times \mathbb N$. At each time step, this vector may perform one of four moves: move one step to the right, move one step above, move one step diagonally, or remain stationary. At time $N$, this walk is either on or above the diagonal, since $v_{k+1}$ is at least as central as $v^*(k)$. To bound the probability of that event, we must keep track of the largest subtrees of $(T_n, v^*(k))$ and $(T_n, v_{k+1})$, as well as the location of the new node $v_{n+1}$. However, Lemma \ref{lemma: main} makes it possible to ignore complicated tree dynamics: First, the lemma indicates that we may bound the probability of the random walk crossing the diagonal by the probability of it reaching the diagonal at some time $M$. Second, the random walk reaches the diagonal at time $M$ if and only if $|(T_M, v^*(k))_{v_{k+1}\downarrow}| = |(T_M, v_{k+1})_{v^*(k) \downarrow}|$. Thus, we may simply keep track of the random vector $\Big(|(T_n, v^*(k))_{v_{k+1}\downarrow}|, |(T_n, v_{k+1})_{v^*(k) \downarrow}| \Big)$, for $n\geq k+1$, and bound the probability of it reaching the diagonal. The evolution of the latter vector is significantly easier to track, since the dynamics of the tree are largely ignored. This random walk may either move one step to the right or one step up (it can also stay in the same place, but we may simply ignore those time steps).

For a point $(i,j)$, let the probability of moving up be $U(i,j)$ and of moving right be $R(i,j)$. For the growing random trees we consider, these probabilities are given by:
\begin{enumerate}
\item \textbf{Preferential attachment:} The probability of a new node joining either  $(T_n, v^*(k))_{v_{k+1}\downarrow}$ or $ (T_n, v_{k+1})_{v^*(k) \downarrow}$ is proportional to the total number of edges incident upon the vertices in the corresponding subtrees. Thus, the probabilities governing the random walk are given by
\begin{equation*}
R(i, j) = \frac{2i-1}{2(i+j-1)}, \quad \text{ and } \quad U(i,j) = \frac{2j-1}{2(i+j-1)}.
\end{equation*}

\item \textbf{Uniform attachment:} Here, the probability of a new node joining either  $(T_n, v^*(k))_{v_{k+1}\downarrow}$ or $ (T_n, v_{k+1})_{v^*(k) \downarrow}$ is proportional to the sizes of these subtrees. Thus, the probabilities are given by
\begin{equation*}
R(i, j) = \frac{i}{i+j}, \quad \text{ and } \quad U(i,j) = \frac{j}{i+j}.
\end{equation*}

\item \textbf{Diffusion on a $d$-regular tree:} In this model, the probability of a new node joining either  $(T_n, v^*(k))_{v_{k+1}\downarrow}$ or $ (T_n, v_{k+1})_{v^*(k) \downarrow}$ is proportional to the respective neighborhood sizes $\cN\Big((T_n, v^*(k))_{v_{k+1}\downarrow} \Big)$ and $\cN\Big(  (T_n, v_{k+1})_{v^*(k) \downarrow} \Big)$. These numbers depend only on the size of the corresponding subtrees, and we can write the probabilities as
\begin{equation*}
R(i, j) = \frac{(d-2)i + 1}{(d-2)(i+j)+2}, \quad \text{ and } \quad U(i,j) = \frac{(d-2)j+1}{(d-2)(i+j)+1}.
\end{equation*}
\end{enumerate}

Note that in all the examples above, the probability of joining a subtree is proportional to an affine function of the size the subtree. These are precisely the types of random walks discussed in Lemma \ref{lemma: galashin} in Appendix~\ref{AppRW}. Consider the events
$$H_k = \{\text{$v_{k+1}$ becomes at least as central as $v^*(k)$ at some future time} \}.$$ It is enough to show that only finitely many events $H_k$ occur, since this ensures that new vertices are added to $\cC_{tot}$ only finitely many times.

As described in Lemma \ref{lemma: main}, the probability of event $H_k$ is the probability that the random walk $\Big(|(T_{n}, v_{k+1})_{v^*(k)\downarrow}|, |(T_{n}, v^*(k))_{v_{k+1}\downarrow}|\Big)$ reaches the diagonal at some point. Note that at $n = k+1$, Lemma \ref{lemma: n/2} gives
\begin{align*}
|(T_{k+1}, v_{k+1})_{v^*(k)\downarrow}| \geq k/2,
\end{align*}
whereas $|(T_{k+1}, v^*(k))_{v_{k+1}\downarrow}| = 1$. By Lemma \ref{lemma: galashin} in Appendix~\ref{AppRW} we then have
\begin{equation*}
\mathbb P(H_k) \leq \max_{A \geq k/2} \frac{P(A)}{2^A} \stackrel{(a)} =  \frac{P(k/2)}{2^{k/2}},
\end{equation*}
where $P(A)$ is a fixed polynomial, and equality $(a)$ holds for all large enough $k \ge K_0$. We then have
\begin{align*}
\sum_{k=1}^\infty \mathbb P(H_k) &= \sum_{k=1}^{K_0-1} \mathbb P(H_k)  + \sum_{k = K_0}^\infty \mathbb P(H_k) \\
&\leq  \sum_{k=1}^{K_0-1} \mathbb P(H_k)  + \sum_{k = K_0}^\infty \frac{P(k/2)}{2^{k/2}}\\
&< \infty. 
\end{align*}
Using the Borel-Cantelli lemma, we conclude that with probability 1, only finitely many events $H_k$ occur, completing the proof.
\end{proof}

To establish the existence of a persistent centroid, we still need to show that the elements in $\cC_\text{tot}$ do not keep replacing each another as centroids. Our next lemma establishes this fact. The result of the lemma may clearly be extended to any finite collection of vertices, showing that the centrality of all the vertices in the set will eventually separate. For any two vertices $u$ and $v$, we define
\begin{equation*}
\cD_\psi(u,v) := \{n \mid \psi_{T_n}(v) = \psi_{T_n}(u)\}.
\end{equation*}

\begin{lemma}\label{lemma: piggy}
For each of the models described in Lemma~\ref{lemma: finite}, and for any two distinct vertices $u$ and $v$, we have $|\cD_\psi(u,v)| < \infty$, with probability 1.
\end{lemma}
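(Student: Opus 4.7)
The plan is to reduce the lemma to an analysis of a random walk on a pair of subtree sizes, then invoke the same Polya urn/random-walk tool that drives Lemma \ref{lemma: finite}. Fix distinct vertices $u$, $v$ and let $u = w_0, w_1, \ldots, w_\ell = v$ denote the (eventually stable) path between them. Write $A_n := |(T_n, u)_{v\downarrow}|$ and $B_n := |(T_n, v)_{u\downarrow}|$ for the sizes of the subtrees hanging off $v$ away from $u$ and off $u$ away from $v$, respectively. The first step is to show that $\cD_\psi(u,v) \subseteq \{n : A_n = B_n\}$. Indeed, whenever $\psi_{T_n}(u) = \psi_{T_n}(v)$, the same contradiction used in the proof of Lemma \ref{lemma: main} forces the largest subtrees of $(T_n, u)$ and $(T_n, v)$ to point toward each other: if the largest subtree of $(T_n, v)$ hung off a neighbor $w \neq w_{\ell-1}$, then the subtree of $(T_n, u)$ toward $v$ would contain this subtree together with the full $u$-$v$ path, giving $\psi_{T_n}(u) \ge \psi_{T_n}(v) + \ell > \psi_{T_n}(v)$. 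Once the largest subtrees point toward each other, the identities $\psi_{T_n}(u) = n - B_n$ and $\psi_{T_n}(v) = n - A_n$, obtained by removing the branches of $u$ and $v$ that lie off the path, immediately yield $A_n = B_n$.

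Next I analyse the walk $(A_n, B_n)$ itself. The sets $(T_n, u)_{v\downarrow}$ and $(T_n, v)_{u\downarrow}$ are disjoint, since they lie on opposite sides of the internal $u$-$v$ path, so each new vertex's parent falls into exactly one of three regions: the $v$-side of $u$ (only $A$ increments), the $u$-side of $v$ (only $B$ increments), or the ``middle'' consisting of $w_1, \ldots, w_{\ell-1}$ and their side branches (neither changes). Conditioning on the event that the step is \emph{effective} (i.e.\ $A$ or $B$ actually increments), the transition probabilities reduce to exactly the formulas $R(A_n, B_n)$ and $U(A_n, B_n)$ tabulated in the proof of Lemma \ref{lemma: finite}, because the middle mass cancels in both numerator and denominator. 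Moreover, effective steps occur infinitely often almost surely: once $A_n + B_n \ge 2$, the per-step probability of an effective step is bounded below by a constant multiple of $(A_n + B_n)/n$ in each of the three models, and the partial sums diverge, so infinitely many effective steps accumulate by a conditional Borel--Cantelli argument.

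It therefore suffices to show that the effective walk $(A_k, B_k)_{k \ge 1}$ visits the diagonal $\{A = B\}$ only finitely often almost surely. For each model, the appropriate affine shift of $(A, B)$---namely $(A, B)$ for uniform attachment, $(2A-1, 2B-1)$ for preferential attachment, and $((d-2)A+1, (d-2)B+1)$ for the $d$-regular diffusion---turns the conditional chain into a generalised Polya urn in which a ball is drawn with probability proportional to its color count and replaced with additional same-color balls. Standard martingale arguments then give $A_k/(A_k + B_k) \to X$ almost surely for a random limit $X$ with continuous distribution (a Beta law in the uniform case, and a Beta-like analogue in the other two), so $X \neq 1/2$ almost surely. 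Combined with $A_k + B_k \to \infty$ from the previous paragraph, this forces $|A_k - B_k| \to \infty$ almost surely, which precludes infinitely many diagonal visits and completes the proof.

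The step I expect to be the main obstacle is the Polya urn convergence in the preferential-attachment and $d$-regular diffusion cases, where the reinforcement is affine rather than strictly proportional, so that identifying the right shifted-urn martingale requires a small extra computation. A safer alternative that sidesteps this machinery is to iterate the Galashin-style tail bound of Lemma \ref{lemma: galashin} over the sequence of potential diagonal return states $(m, m)$ and apply Borel--Cantelli, leveraging the fact that $A + B$ grows unboundedly between successive diagonal visits; this is more hands-on but purely within the framework already used for Lemma \ref{lemma: finite}.
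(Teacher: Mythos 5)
Your proposal is correct and takes essentially the same route as the paper: both reduce $\cD_\psi(u,v)$ to diagonal visits of the pair of subtree sizes $\bigl(|(T_n,v)_{u\downarrow}|, |(T_n,u)_{v\downarrow}|\bigr)$ and conclude via P\'olya-urn martingale convergence of the ratio to a continuous Beta-type limit, which is almost surely different from $1/2$. Your explicit verification that $\psi_{T_n}(u)=\psi_{T_n}(v)$ forces $A_n=B_n$, and that effective steps occur infinitely often, fills in details the paper leaves implicit (it cites Lemma~\ref{lemma: main} for the reduction and ignores the idle steps).
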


\begin{proof}
By Lemma \ref{lemma: main}, it suffices to show that with probability 1, the random walk defined by $(X_n, Y_n) := \Big( |(T_n, v)_{u \downarrow}|, |(T_n, u)_{v \downarrow}| \Big)$ touches the diagonal only finitely many times. Without loss of generality, we assume that vertex $v$ is born after vertex $u$. Thus, the random walk starts when vertex $v$ is born, and the starting point is $ \Big( |(T_n, v)_{u \downarrow}|, 1 \Big) := (A, 1)$. As in Lemma \ref{lemma: galashin} in Appendix~\ref{AppRW}, let the probability that a vertex is added to a subtree of size $i$ be proportional to $i + \beta/\alpha$, where $1 + \beta/\alpha>0$. The evolution of the vector $(X_n, Y_n)$  then follows a standard P\'olya urn model, and by almost sure convergence of martingale sequences, combined with standard distributional convergence results~\cite{Pem07}, we have
\begin{align*}
\frac{X_n}{X_n + Y_n} \stackrel{\text{a.s.}} \to \xi \sim \text{Beta}(A+ \beta/\alpha, 1 + \beta/\alpha).
\end{align*}
By absolute continuity of the Beta distribution, we have $\mathbb P (\xi = 1/2) = 0$. Since the fraction converges to $\xi \neq 1/2$, with probability 1 it can equal $1/2$ only finitely many times. This proves the lemma.
\end{proof}

Lemmas \ref{lemma: finite} and \ref{lemma: piggy} together imply the existence of a single persistent centroid. This is summarized in the following theorem:

\begin{theorem*}
\label{ThmPersist}
For the preferential and uniform attachment models, and for the $d$-regular diffusion tree with $d \ge 3$, there exists a time $N$ and a node $v^* \in T_N$ such that $v^*$ is the unique centroid of $T_n$ for all $n \ge N$, with probability 1.
\end{theorem*}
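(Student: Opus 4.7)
The plan is to combine Lemmas \ref{lemma: finite} and \ref{lemma: piggy} with the fact that $\psi_{T_n}(v)$ is non-decreasing and changes by at most $1$ per step.

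First I would invoke Lemma \ref{lemma: finite} to fix a (random) finite set $\cC_{\text{tot}} = \{u_1, \ldots, u_k\}$ and choose $N_1$ large enough that no vertex is added to $\cC_{\text{tot}}$ after time $N_1$; by definition, this means $\cC(T_n) \subseteq \cC_{\text{tot}}$ for all $n \ge N_1$. Next, for each of the finitely many (conditional on $k$) pairs $i \neq j$, Lemma \ref{lemma: piggy} gives $|\cD_\psi(u_i, u_j)| < \infty$ almost surely, so by a finite union there exists a time $N_2$ beyond which $\psi_{T_n}(u_i) \neq \psi_{T_n}(u_j)$ for every pair $i \neq j$.

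The key additional observation is that, for any vertex $v$, $\psi_{T_{n+1}}(v) - \psi_{T_n}(v) \in \{0, 1\}$, so the integer sequence $\psi_{T_n}(u_i) - \psi_{T_n}(u_j)$ changes by at most one in absolute value at each step. Combined with the fact that this difference is nonzero on $[N_2, \infty)$, it cannot change sign there, and so its sign is constant. Hence the ordering of $\{\psi_{T_n}(u_1), \ldots, \psi_{T_n}(u_k)\}$ is frozen for $n \ge N_2$, yielding a single vertex $v^* \in \cC_{\text{tot}}$ that minimizes $\psi_{T_n}$ over $\cC_{\text{tot}}$ for every such $n$.

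Finally, set $N = \max(N_1, N_2)$. For $n \ge N$, any centroid of $T_n$ lies in $\cC_{\text{tot}}$ by the choice of $N_1$, and $v^*$ is the unique minimizer of $\psi_{T_n}$ on $\cC_{\text{tot}}$ by the choice of $N_2$; moreover, any $u \in V(T_n) \setminus \cC_{\text{tot}}$ with $\psi_{T_n}(u) \le \psi_{T_n}(v^*)$ would itself be a centroid, contradicting $u \notin \cC_{\text{tot}}$. Thus $v^*$ is the unique centroid of $T_n$ for all $n \ge N$. The principal content is already packed into the two lemmas; the only genuinely new step is the sign-stabilization of differences of $\psi$-values, and that is essentially free from the ``changes by at most one'' property, so I do not expect a substantive obstacle here.
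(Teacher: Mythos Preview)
Your argument is correct and is essentially the paper's own proof spelled out in full: the paper simply says that, since $\cC_{\text{tot}}$ is finite by Lemma~\ref{lemma: finite}, failure of persistence would force two vertices to surpass one another infinitely often, which Lemma~\ref{lemma: piggy} forbids. The only thing you add is the explicit ``changes by at most one'' sign-stabilization step, which the paper leaves implicit in the phrase ``surpass each other infinitely often''; this is exactly the right detail to fill in, and there is no gap.
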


\begin{proof}
By Lemma \ref{lemma: finite}, the set of vertices that are ever centroids is finite. Clearly, if a single centroid does not persist, there exist at least two vertices that surpass each other infinitely often in terms of centrality. However, Lemma \ref{lemma: piggy} rules out such a scenario, implying the persistence of a single centroid.
\end{proof}

%%%%%

\section{Persistence of the top $K$ central nodes}
\label{SecPersistK}

We now extend the result of the previous section to establish persistence of the top $K$ central nodes. The main theorem of this section has an important consequence concerning root-finding algorithms that generate a confidence set for the initial vertex of the random growing tree~\cite{BubEtal14, KhiLoh15}. As discussed in more detail following the statement of Theorem~\ref{ThmPersistK}, the theorem implies the eventual stability of the confidence set selected according to the function $\psi$.

For $n \geq K$, let $\cK_n = \{\nu_1(n), \dots, \nu_K(n)\}$ denote the set of vertices of $T_n$ that are most central in the following sense: For every vertex $v \notin \cK_n$, we have the inequality
\begin{equation*}
\psi_{T_n}(v) \geq \max_{\nu_i \in \cK_n} \psi_{T_n}(\nu_i(n)).
\end{equation*}
The set $\cK_n$ contains the $K$ vertices of $T_n$ having the smallest largest subtrees, with ties being broken arbitrarily. We assume without loss of generality that 
$$\psi_{T_n}(\nu_1(n)) \leq \psi_{T_n}(\nu_i(n)) \leq \dots \leq \psi_{T_n}(\nu_K(n)).$$
The main result of this section is to show that with probability 1, the set $\cK_n$ also has the persistence property. In other words, there exist vertices $\{v_1^*, \dots, v_K^*\}$ and some $N$ such that for all $n \ge N$, the $v_i^*$'s are the unique top $K$ central nodes in $T_n$.

Our first lemma establishes that even the least central vertex in $\cK_n$ has its largest subtree size ``not too large"---i.e., of size bounded by a linear function not identically equal to $n$. The proof requires a P\'{o}lya urn analysis that tracks the number of vertices in the subtrees connected to the first $K$ nodes in each of the random growth models. We will again restrict our attention in the $d$-regular diffusion case to $d \ge 3$, since as discussed in Remark~\ref{RemD2}, persistence cannot occur in the case $d = 2$.

\begin{lemma}\label{lemma: cafe 12}
For the preferential and uniform attachment models, and for the $d$-regular diffusion tree with $d \ge 3$, there exists a continuous random variable $\xi$ satisfying $\mathbb P(\xi < 1) = 1$ and $$\psi_{T_n}(\nu_K(n)) \leq \xi n,$$
almost surely, for all $n \geq K$.
\end{lemma}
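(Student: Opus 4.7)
The plan is to exhibit $K+1$ specific vertices whose $\psi$-values are each bounded by $\xi n$ for some $\xi<1$ almost surely; since $\nu_K(n)$ realises the $K$th smallest value of $\psi_{T_n}$, the same bound will then transfer to $\nu_K(n)$. The natural candidates are the first $K+1$ vertices $v_1,\ldots,v_{K+1}$ of the initial tree $T_{K+1}$. For each of them, the control on $\psi$ will come from a two-dimensional P\'olya urn tied to a single incident edge of $T_{K+1}$, which behaves well uniformly across all three growth models.

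Concretely, for each $i\in\{1,\ldots,K+1\}$ pick any edge $e_i=(v_i,w_i)$ of $T_{K+1}$ incident to $v_i$ (such an edge exists because $T_{K+1}$ is connected on $K+1\ge 2$ vertices). For $n\ge K+1$, let $A_i(n)$ denote the set of vertices of $T_n$ lying on $v_i$'s side of $e_i$, and set $B_i(n):=V(T_n)\setminus A_i(n)$, so that $|A_i(n)|+|B_i(n)|=n$. Exactly the case analysis carried out in the proof of Lemma~\ref{lemma: finite} shows that in each of the three models the probability that the newly added vertex joins $A_i(n)$ is an affine function of $|A_i(n)|$ with strictly positive coefficients; crucially, in the $d$-regular diffusion case the leading coefficient is $d-2$, which is positive precisely because of the standing assumption $d\ge 3$ (cf.\ Remark~\ref{RemD2}). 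By standard generalized P\'olya urn theory~\cite{Pem07}, $|A_i(n)|/n$ converges almost surely to a continuously distributed $\eta_i\in(0,1)$.

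The key deterministic inequality is then $\psi_{T_n}(v_i)\le\max(n-|A_i(n)|,\,|A_i(n)|-1)$ for every $i\in\{1,\ldots,K+1\}$ and every $n\ge K+1$. To see this, classify the subtrees of $(T_n,v_i)$: the single subtree through $w_i$ coincides with $B_i(n)$ and has size $n-|A_i(n)|$, while every other subtree of $(T_n,v_i)$ goes through a neighbour of $v_i$ distinct from $w_i$, which necessarily lies in $A_i(n)\setminus\{v_i\}$, so that subtree is contained in $A_i(n)\setminus\{v_i\}$ and has size at most $|A_i(n)|-1$. Dividing by $n$ and taking $\limsup$, I get $\limsup_{n\to\infty}\psi_{T_n}(v_i)/n\le\max(1-\eta_i,\eta_i)<1$ almost surely. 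Because the $K+1$ vertices $v_1,\ldots,v_{K+1}$ each obey this finite-$n$ bound and $\psi_{T_n}(\nu_K(n))$ is the $K$th smallest $\psi$-value in $T_n$, we have $\psi_{T_n}(\nu_K(n))\le\max_{i\in\{1,\ldots,K+1\}}\psi_{T_n}(v_i)$, and therefore $\limsup_n\psi_{T_n}(\nu_K(n))/n<1$ almost surely.

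Together with the deterministic inequality $\psi_{T_n}(\nu_K(n))/n\le(n-1)/n<1$ at each individual $n\ge K$, this yields $X:=\sup_{n\ge K}\psi_{T_n}(\nu_K(n))/n<1$ almost surely. To upgrade $X$ to a \emph{continuous} random variable as required by the lemma, I would sample a Uniform$(0,1)$ variable $U$ independent of the tree process and set $\xi:=X+(1-X)U$; then $\xi$ has a density, $\mathbb P(\xi<1)=1$, and $\xi\ge X\ge\psi_{T_n}(\nu_K(n))/n$ for every $n\ge K$. The main technical hurdle is the P\'olya urn convergence with a non-degenerate limit, which reduces to checking that the urn replacement parameters for a single edge remain strictly positive in each model; this is routine for preferential and uniform attachment, and is precisely the reason for imposing $d\ge 3$ in the diffusion case.
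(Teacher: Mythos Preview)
Your proof is correct and follows the same overall strategy as the paper: bound $\psi_{T_n}(\nu_K(n))$ by $\max_i \psi_{T_n}(v_i)$ over the first few vertices, then control each $\psi_{T_n}(v_i)$ via P\'olya urn convergence of component sizes. The differences are in implementation. The paper removes all $K-1$ edges of $T_K$ at once, tracks the resulting $K$ component sizes as a single $K$-dimensional P\'olya urn with Dirichlet limit, and uses the bound $\max_i \psi_{T_n}(v_i)\le n-\min_i |T_{i,n}|$. You instead attach to each of $v_1,\ldots,v_{K+1}$ a single incident edge and run $K+1$ separate two-dimensional urns with Beta limits, bounding $\psi_{T_n}(v_i)\le \max(n-|A_i(n)|,|A_i(n)|-1)$. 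Your route has the virtue of reusing exactly the two-colour urn already analysed in Lemmas~\ref{lemma: finite} and~\ref{lemma: piggy}, at the cost of handling the $K+1$ urns (which are not independent) only through their marginal limits. Your explicit randomisation $\xi=X+(1-X)U$ to obtain a genuinely continuous $\xi$ is a nice touch; the paper's construction of $\xi$ is less explicit on this point, and in fact the only property used downstream (in Lemma~\ref{LemFiniteK}) is that $\xi$ has no atom at $1$.
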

\begin{proof}
Let $\{v_1,\dots, v_K\}$ denote the first $K$ vertices, i.e., the vertices of $T_K$. For any $n \geq K$, we have
\begin{equation*}
\max_{1 \leq i \leq K} \psi_{T_n}(v_i) \geq \max_{1 \leq i \leq K} \psi_{T_n}(\nu_i(n)) = \psi_{T_n}(\nu_K(n)).
\end{equation*}
Thus, it suffices to derive an upper bound for $\max_{1 \le i \le K} \psi_{T_n}(v_i)$.

For $1 \leq i \leq n$, let $T_{i, n}$ be the tree in the forest formed by removing all the edges between $\{v_1, \dots, v_K\}$ in $T_n$. Clearly,
\begin{equation*}
\psi_{T_n}(v_i) \leq \max(|T_{i,n}|, n-|T_{i,n}|), \quad \text{for} \quad 1\leq i\leq K.
\end{equation*}
Thus,
\begin{align}
\label{EqnShrew}
\max_{1 \leq i \leq K} \psi_{T_n}(v_i) &\leq \max(|T_{1,n}|, n-|T_{1,n}|, \dots, |T_{K,n}|, n-|T_{K,n}|) \notag \\
&= \max(\max_{1 \leq i \leq K} |T_{i,n}|, n-\min_{1\leq i \leq K} |T_{i,n}|) \notag \\
&\le n-\min_{1\leq i \leq K} |T_{i,n}|.
\end{align}
Thus, an appropriate lower bound on $\min_{1\leq i \leq K} |T_{i,n}|$ will provide the desired upper bound. We establish a random linear lower bound for each of the growing graphs separately, beginning with uniform attachment. Apart from being easier to analyze, it will illustrate the idea that we will use in the other two cases.
\begin{enumerate}
\item \textbf{Uniform attachment:} The vector $(|T_{1,n}|, \dots, |T_{K,n}|)$ evolves according to a standard P\'olya urn process with replacement matrix $I_K$ and starting state $(1, 1, \dots, 1)$. Thus,  
$$\left(\frac{|T_{1,n}|}{n}, \dots, \frac{|T_{K,n}|}{n} \right) \stackrel{a.s.} \to (C_1, \dots, C_K) \sim \text{Dirichlet}(1, 1, \dots, 1).$$ By the continuous mapping theorem, we conclude that
$$\frac{1}{n} \min_{1\leq i \leq K} |T_{i,n}| \stackrel{a.s.} \to C =\min(C_1, \dots, C_K),$$
where $C$ is a continuous random variable taking values in  $[0,1] $. Taking inverses, we then have 
$$\frac{n}{\min_{1\leq i \leq K} |T_{i,n}|}  \stackrel{a.s.} \to \frac{1}{C}.$$ 
Note that $1/C$ does not have a point mass at infinity, since $C$ is a continuous random variable. This almost sure convergence implies the existence of a random variable $\hat \xi$ such that \mbox{$\mathbb P( \hat \xi < \infty) = 1$}, and which bounds $\frac{n}{\min_{1\leq i \leq K} |T_{i,n}|}$ almost surely, for all $n$. Hence, $$\min_{1\leq i \leq K} |T_{i,n}| \geq \frac{n}{\hat \xi},$$
for all $n \geq K$.  Substituting into inequality~\eqref{EqnShrew}, we then obtain
\begin{align*}
\max_{1 \leq i \leq K} \psi_{T_n}(v_i) = n-\min_{1\leq i \leq K} |T_{i,n}| \leq n(1-1/\hat \xi) := n\xi.
\end{align*}
Since $\hat \xi < \infty$ with probability 1, we have $\xi < 1$ with probability 1. This concludes the proof.

\item \textbf{Preferential attachment:} At time $K$, the number of possible structures of $T_K$ is finite. We denote the set of all possible trees at time $K$ by $\text{Trees}_K = \{t_1,t_2, \dots, t_\kappa\}$, where \mbox{$\kappa = |\text{Trees}_K|$}. Let $\mathbb P (T_K= t_i) = p_i$. Also let $S_{\ell,n}$ denote the degree sum of the vertices in $T_{\ell,n}$. Conditioned on $T_K = t_i$, the vector $(S_{1,n}, \dots, S_{K,n})$ evolves according to a P\'{o}lya urn process with replacement matrix $2I_K$ and initial configuration $(\deg(v_1), \dots, \deg(v_K))$, corresponding to the degrees of the vertices $(v_1, \dots, v_K)$ in $t_i$. Hence, conditioned on $T_K = t_i$, we have
\begin{equation*}
\left(\frac{S_{1,n}}{2(n-1)}, \dots, \frac{S_{K,n}}{2(n-1)}\right) \stackrel{a.s.}{\to} \text{Dirichlet}\left(\frac{\deg(v_1)}{2}, \dots, \frac{\deg(v_K)}{2}\right).
\end{equation*}
Furthermore,
\begin{equation*}
S_{\ell,n} = 2(|T_{\ell,n}| - 1) + \deg(v_\ell), \qquad \forall 1 \le \ell \le K,
\end{equation*}
so it is easy to see that
\begin{equation}
\label{EqnBunny}
\left(\frac{|T_{1,n}|}{n}, \dots, \frac{|T_{K,n}|}{n} \right) \stackrel{a.s.}  \to (C^i_1, \dots, C^i_K) \sim  \text{Dirichlet}\left(\frac{\text{deg}(v_1)}{2}, \dots, \frac{\text{deg}(v_K)}{2}\right),
\end{equation}
as well. By the continuous mapping theorem, equation~\eqref{EqnBunny} implies the almost sure convergence
$$\frac{1}{n} \min_{1\leq i \leq K} |T_{i,n}| \stackrel{a.s.} \to \min(C^i_1, \dots, C^i_K),$$
so $$\frac{n}{\min_{1\leq i \leq K} |T_{i,n}|}  \stackrel{a.s.} \to \frac{1}{\min(C^i_1, \dots, C^i_K)}.$$
Thus, there exists a continuous random variable $\hat \xi^i$ that bounds $\frac{n}{\min_{1\leq i \leq K} |T_{i,n}|}$, almost surely, for all $n$, so
$$\min_{1\leq i \leq K} |T_{i,n}| \geq \frac{n}{\hat \xi^i},$$
for all $n \geq K$.  Substituting into inequality~\eqref{EqnShrew}, we then obtain
\begin{align*}
\max_{1 \leq i \leq K} \psi_{T_n}(v_i) = n-\min_{1\leq i \leq K} |T_{i,n}| \leq n(1-1/\hat \xi^i).
\end{align*}
Define the random variable $\xi$ to equal $(1-1/\hat \xi^i)$ on the event $\{T_K = t_i\}$. Using a similar argument as in the case of uniform attachment, we have $\hat \xi^i < \infty$ for each $i$ with probability 1, so $\xi < 1$ with probability 1. 

\item \textbf{$d$-regular diffusion:} As in the case of the preferential attachment model, we define the set of all possible trees at time $K$ by $\text{Trees}_K = \{t_1,t_2, \dots, t_\kappa\}$, where $\kappa = |\text{Trees}_K|$ and $\mathbb P (T_K= t_i) = p_i$. Let $U_{\ell,n}$ denote the number of uninfected neighbors of vertices in $T_{\ell,n}$. Conditioned on $T_K = t_i$, the vector $(U_{1,n}, \dots, U_{K,n})$ evolves according to a P\'{o}lya urn process with replacement matrix $(d-2)I_K$ and initial configuration $(d-\deg(v_1), \dots, d-\deg(v_K))$, where $(\deg(v_1), \dots, \deg(v_K))$ again denotes the degrees of $(v_1, \dots, v_K)$ in $t_i$. Then
\begin{equation*}
\left(\frac{U_{1,n}}{(d-2)n}, \dots, \frac{U_{K,n}}{(d-2)n}\right) \stackrel{a.s.}{\to} \text{Dirichlet}\left(\frac{d-\deg(v_1)}{d-2}, \dots, \frac{d-\deg(v_K)}{d-2}\right),
\end{equation*}
implying that
\begin{equation*}
\left(\frac{|T_{1,n}|}{n}, \dots, \frac{|T_{K,n}|}{n} \right) \stackrel{a.s.}  \to (C^i_1, \dots, C^i_K) \sim  \text{Dirichlet}\left(\frac{d-\text{deg}(v_1)}{d-2}, \dots, \frac{d-\text{deg}(v_K)}{d-2}\right).
\end{equation*}
The remainder of the analysis proceeds exactly as in the case of the preferential attachment model.
\end{enumerate} 
This completes the proof of the lemma.
\end{proof}

We now define the collection of vertices that ever enter the set of top $K$ most central nodes. Let $\cK_n'$ denote the set $\cK_n$ augmented with any additional vertices that are at least as central as $\nu_K(n)$ in $T_n$, and let
\begin{equation*}
\cK_\text{tot} := \cup_{n = 1}^\infty \cK_n'.
\end{equation*}
We have the following lemma, the analog of Lemma~\ref{lemma: finite}:

\begin{lemma}
\label{LemFiniteK}
In the same setting as Lemma~\ref{lemma: cafe 12}, we have $|\cK_{tot}| < \infty$, with probability 1.
\end{lemma}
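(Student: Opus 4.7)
My plan is to mimic the proof of Lemma~\ref{lemma: finite}, replacing the single centroid $v^*(k)$ by the first $K$ vertices $v_1, \dots, v_K$ and using the random but strictly sublinear bound $\psi_{T_n}(v_i) \le \xi n$ established inside the proof of Lemma~\ref{lemma: cafe 12} in place of the deterministic bound $\psi_{T_n}(v^*(k)) \le n/2$ furnished by Lemma~\ref{lemma: shah}. For $k \ge K$ and $i \in \{1,\dots,K\}$, define
\begin{equation*}
H_k^i = \{v_{k+1}\text{ becomes at least as central as }v_i\text{ at some future time}\},
\end{equation*}
so that $\{v_{k+1} \in \cK_\text{tot}\} \subseteq \bigcup_{i=1}^K H_k^i$. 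The inclusion is a pigeonhole argument: if $v_{k+1} \in \cK_N'$, then $\psi_{T_N}(v_{k+1}) \le \psi_{T_N}(\nu_K(N))$, and since at most $K-1$ vertices of $T_N$ can be strictly more central than $\nu_K(N)$, at least one of $v_1,\dots,v_K$ must satisfy $\psi_{T_N}(v_i) \ge \psi_{T_N}(v_{k+1})$.

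The proof of Lemma~\ref{lemma: main} only uses the initial inequality $\psi_{T_{k+1}}(v_{k+1}) > \psi_{T_{k+1}}(v^*(k))$, so the same argument applies with $v_i$ in place of $v^*(k)$ whenever the corresponding initial inequality holds, and reduces $H_k^i$ to the event that the random walk
\begin{equation*}
(X_n, Y_n) := \Big(|(T_n, v_i)_{v_{k+1}\downarrow}|,\ |(T_n, v_{k+1})_{v_i\downarrow}|\Big), \qquad n \ge k+1,
\end{equation*}
eventually reaches the diagonal $X = Y$. The transition probabilities in all three growth models depend only on the sizes of the two subtrees on either end of the path between $v_i$ and $v_{k+1}$, and are identical to those computed in the proof of Lemma~\ref{lemma: finite}, so Lemma~\ref{lemma: galashin} applies. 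At time $k+1$ the walk starts from $X_{k+1} = 1$ and, by the argument of Lemma~\ref{lemma: n/2},
\begin{equation*}
Y_{k+1} = (k+1) - |(T_{k+1}, v_i)_{u_1 \downarrow}| \ge (k+1) - \psi_{T_{k+1}}(v_i) \ge (1-\xi)(k+1),
\end{equation*}
where $u_1$ is the neighbor of $v_i$ on the path to $v_{k+1}$ and $\xi$ is the continuous random variable of Lemma~\ref{lemma: cafe 12}, with $\mathbb{P}(\xi < 1) = 1$.

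The remaining obstacle, and the main new issue relative to Lemma~\ref{lemma: finite}, is that this lower bound on $Y_{k+1}$ carries a random coefficient $1 - \xi$, so Lemma~\ref{lemma: galashin} does not yield a summable deterministic bound on $\mathbb{P}(H_k^i)$ outright. I handle this by truncation: fix $\delta > 0$ and let $A_\delta = \{\xi \le 1 - \delta\}$. On $A_\delta$ one has $Y_{k+1} \ge \delta(k+1)$ pathwise, and the initial-inequality hypothesis needed to invoke Lemma~\ref{lemma: main} holds for all $k \ge 1/\delta - 1$; since the event $\{Y_{k+1} \ge \delta(k+1)\}$ is $\sigma(T_{k+1})$-measurable, the Markov property of the walk combined with Lemma~\ref{lemma: galashin} yields
\begin{equation*}
\mathbb{P}(H_k^i \cap A_\delta) \le \mathbb{P}\big(H_k^i \cap \{Y_{k+1} \ge \delta(k+1)\}\big) \le \frac{P(\delta(k+1))}{2^{\delta(k+1)}}
\end{equation*}
for all sufficiently large $k$, with $P$ a fixed polynomial. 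The right-hand side is summable in $k$, so Borel-Cantelli gives $\mathbb{P}(A_\delta \cap \limsup_k H_k^i) = 0$. Sending $\delta \downarrow 0$ and using $\mathbb{P}(A_\delta) \to \mathbb{P}(\xi < 1) = 1$ yields $\mathbb{P}(\limsup_k H_k^i) = 0$ for each $i$, and a union bound over $i = 1, \dots, K$ gives $|\cK_\text{tot}| < \infty$ almost surely.
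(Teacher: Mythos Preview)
Your proof is correct and follows essentially the same architecture as the paper's: a truncation event (your $A_\delta$, the paper's $B_M$), the random-walk reduction via Lemma~\ref{lemma: main} and Lemma~\ref{lemma: galashin}, Borel--Cantelli, and then sending the truncation parameter to its limit using Lemma~\ref{lemma: cafe 12}. The one substantive difference is the choice of the $K$ reference vertices: the paper compares $v_{n+1}$ to the time-$n$ top-$K$ nodes $\nu_1(n),\dots,\nu_K(n)$, whereas you compare to the \emph{fixed} first $K$ vertices $v_1,\dots,v_K$ and invoke the stronger intermediate bound $\max_i \psi_{T_n}(v_i)\le \xi n$ from inside the proof of Lemma~\ref{lemma: cafe 12}. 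Both choices require the same pigeonhole step (at most $K-1$ vertices can be strictly more central than $\nu_K(\cdot)$), and both yield the same random-walk starting bound $Y_{k+1}\ge (1-\xi)(k+1)$ on the truncation event. Your version is arguably a bit cleaner since the reference vertices do not change with $k$, but the two arguments are interchangeable.
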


\begin{proof}
Consider the set of events
$$B_M = \bigcap_{n \ge K} \{\psi_{T_n}(\nu_K(n)) \leq nM\},$$
for any real $M \in (0,1)$. Thus, $B_M$ is the event that the least central node in $\cK_n$, i.e., $\nu_K(n)$, has its largest subtree upper-bounded by $nM$ at every time $n$. Now consider the event
$$H_n = \{\exists \ell: v_{n+1} \in \cK_\ell'\}.$$
Thus, $H_n$ is the event that $v_{n+1}$ becomes at least as central as one of the top $K$ central nodes at some point in the future. On the event $H_n$, we must have
$$\psi_{T_\ell}(v_{n+1}) \leq \max_{1\leq i \leq K} \psi_{T_\ell}(\nu_i(n)).$$
Now define the event
$$E_i = \{v_{n+1} \text{ becomes at least as central as } \nu_i(n) \text{ at some future point}\}.$$
We have the bound
\begin{align*}
\mathbb P(B_M \cap H_n) &\leq \mathbb P\left(B_M \cap \left(\bigcup_{i=1}^K E_i \right)\right) \leq \sum_{i=1}^K \mathbb P(B_M \cap E_i).
\end{align*}
By Lemma \ref{lemma: main}, we may control the probability $\mathbb P(B_M \cap E_i)$ by bounding the probability that the random walk $\Big(|(T_\ell, v_{n+1})_{\nu_i(n)\downarrow}|, |(T_\ell, \nu_i(n))_{v_{n+1}\downarrow}|\Big)$ reaches the diagonal. Note that this walk starts from the point $\Big(|(T_{n+1}, v_{n+1})_{\nu_i(n)\downarrow}|, 1\Big)$ at time $\ell = n+1$. If $(\nu_i(n), u_1, \dots, v_{n+1})$ is the path from $\nu_i(n)$ to $v_{n+1}$, then on the event $B_M$, we have
\begin{align*}
|(T_{n+1}, v_{n+1})_{\nu_i(n)\downarrow}| &= n - |(T_n, \nu_i(n))_{u_1\downarrow}|\\
&\geq n - \psi_{T_n}(\nu_i(n))\\
&\geq n - \psi_{T_n}(\nu_K(n)\\
&\geq n-Mn \\
&= n(1-M).
\end{align*} 
Thus, the starting point lies below the diagonal and to the right of the point $\big((1-M)n, 1\big)$. Lemma~\ref{lemma: galashin} in Appendix~\ref{AppRW} then implies that
\begin{align*}
\mathbb P(B_M \cap H_n) \leq K \cdot \max_{A \geq (1-M)n} \frac{P(A)}{2^A} \stackrel{(a)} = K \cdot \frac{P\big((1-M)n\big)}{2^{(1-M)n}},
\end{align*}
where $(a)$ holds for all large enough $n$. The expression on the right-hand side form a convergent series in $n$. Applying Borel-Cantelli lemma, we conclude that for all $M$, the event $H_n \cap B_M$ occurs finitely often, with probability 1. Furthermore, Lemma \ref{lemma: cafe 12} implies that $\mathbb P(B_M)\to 1$ as $M \to 1$, since the random variable $\xi$ appearing in the lemma does not have a point mass at 1. Therefore, with probability 1, the events $H_n$ can can occur only finitely often, which implies the desired statement.
\end{proof}

The stability of the set of top $K$ central nodes then follows by combining Lemmas~\ref{LemFiniteK} and \ref{lemma: piggy}, as in the proof of Theorem~\ref{ThmPersist}:

\begin{theorem*}
\label{ThmPersistK}
For the preferential and uniform attachment models, and for the $d$-regular diffusion tree with $d \ge 3$, with probability 1, there exists a time $N$ and a collection $\{\nu_1^*, \dots, \nu_K^*\} \subseteq T_N$ such that $\{\nu_1^*, \dots, \nu_K^*\}$ are the $K$ most central nodes of $T_n$, for all $n \ge N$.
\end{theorem*}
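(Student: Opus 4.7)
The plan mirrors the derivation of Theorem~\ref{ThmPersist} from Lemmas~\ref{lemma: finite} and~\ref{lemma: piggy}, now combining Lemma~\ref{LemFiniteK} with Lemma~\ref{lemma: piggy} at the level of the finite set $\cK_{\text{tot}}$. Lemma~\ref{LemFiniteK} gives an almost-sure event $\Omega_1 = \{|\cK_{\text{tot}}| < \infty\}$. Because the collection of unordered pairs of distinct vertices appearing across $\{T_n\}_{n\ge 1}$ is countable, and Lemma~\ref{lemma: piggy} bounds $|\cD_\psi(u,v)|$ almost surely for each such pair, a countable union yields an almost-sure event
\[
\Omega_2 = \{\,|\cD_\psi(u,v)| < \infty \text{ for every pair of distinct vertices } u,v\,\}.
\]
I argue pathwise on $\Omega_1 \cap \Omega_2$, which has probability~$1$.

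On $\Omega_1$, pick $N_0$ large enough that no new vertex enters $\cK_{\text{tot}}$ after time $N_0$; equivalently, $\cK_n' \subseteq \cK_{\text{tot}}$ for all $n \ge N_0$. By the very definition of $\cK_n'$, every vertex $w \notin \cK_{\text{tot}}$ must satisfy $\psi_{T_n}(w) > \psi_{T_n}(\nu_K(n))$ for all $n \ge N_0$, so the top-$K$ set $\cK_n$ is drawn entirely from the finite set $\cK_{\text{tot}}$ for $n \ge N_0$. Since $\cK_{\text{tot}}$ contains only finitely many pairs, $\Omega_2$ then supplies a common time $N \ge N_0$ after which $\psi_{T_n}(u) \ne \psi_{T_n}(v)$ for every pair of distinct $u, v \in \cK_{\text{tot}}$.

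It remains to upgrade ``no ties after $N$'' to ``fixed ordering after $N$.'' As noted in the proof of Lemma~\ref{lemma: main}, $\psi_{T_n}(v)$ is nondecreasing in $n$ and increases by at most $1$ per time step, so for any fixed $u, v$ the difference $D_n := \psi_{T_n}(v) - \psi_{T_n}(u)$ changes by an element of $\{-1, 0, 1\}$ at each step. A sign flip of $D_n$ for some $n \ge N$ would therefore force $D_n = 0$ at some intermediate time, contradicting the absence of ties within $\cK_{\text{tot}}$. Hence the strict $\psi$-ordering on $\cK_{\text{tot}}$ is frozen for $n \ge N$, and its $K$ most central elements form the desired fixed collection $\{\nu_1^\ast, \dots, \nu_K^\ast\}$, equal to $\cK_n$ for every $n \ge N$.

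The substantive analytical work is entirely absorbed into Lemmas~\ref{LemFiniteK} and~\ref{lemma: piggy}, so I do not anticipate any additional obstacle. The only extra observation required is the bounded-increment argument of the previous paragraph, which turns ``finitely many ties'' into ``eventually fixed ordering''; the rest is finite-set bookkeeping on $\cK_{\text{tot}}$.
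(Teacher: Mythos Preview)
Your proposal is correct and follows exactly the approach the paper takes: the paper's proof of Theorem~\ref{ThmPersistK} is a single sentence stating that the result follows by combining Lemmas~\ref{LemFiniteK} and~\ref{lemma: piggy} ``as in the proof of Theorem~\ref{ThmPersist},'' and you have simply written out the details of that combination. Your bounded-increment observation (that a sign change of $\psi_{T_n}(v)-\psi_{T_n}(u)$ forces a tie) is the same intermediate-value step already implicit in Lemma~\ref{lemma: main} and in the paper's proof of Theorem~\ref{ThmPersist}, so nothing new is being introduced. One minor wording slip: the containment $\cK_n' \subseteq \cK_{\text{tot}}$ holds for every $n$ by definition, so your choice of $N_0$ is really just ensuring that all vertices of the finite set $\cK_{\text{tot}}$ have been born; this does not affect the argument.
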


%\begin{rem}
As mentioned at the beginning of the section, Theorem~\ref{ThmPersistK} has important implications for root-finding algorithms in random growing trees: One may obtain confidence sets for the root node in uniform and preferential attachment models~\cite{BubEtal14} and $d$-regular diffusion trees~\cite{KhiLoh15} by selecting the nodes that minimize the maximum subtree estimator $\psi$. Furthermore, the size of the confidence set may be taken as a fixed function $K(\epsilon)$ of the error probability $\epsilon$, and does not need to grow with $n$. Theorem~\ref{ThmPersistK} implies that the confidence sets constructed in this manner will almost surely stabilize after a finite time, showing that the confidence set construction is in some sense robust.
%\end{rem}

%%%%%%
 
\section{Ensuring centrality of the root node}
\label{SecHub}

The results from the earlier sections indicate that any fixed node has some finite probability of eventually becoming the persistent centroid of a random growing tree. We consider the special case of the root node, i.e., the first vertex $v_1$, and ask the question: Can we ensure that $v_1$ is the persistent centroid of the random growing tree? Note that the probability of the complementary event is at least $1/2$, since there is no way to distinguish nodes $v_1$ and $v_2$. However, in the preferential and uniform attachment graphs, we may boost the probability of $v_1$ being the persistent centroid by initializing the tree with a ``hub" centered at $v_1$ of size $k$. In other words, the graph begins with a star configuration in which the nodes $\{v_2, \dots, v_{k+1}\}$ are all attached to $v_1$. In the case of a $d$-regular tree diffusion, the bounded degree makes it impossible to create a large hub at $v_1$. Hence, we instead begin with the subtree consisting of all nodes at a distance at most $r$ from $v_1$. As a function of $\epsilon$, we derive bounds on the necessary and sufficient size hub size $k$ (for preferential and uniform attachment) and the radius $r$ (for a $d$-regular diffusion) to ensure the persistent centrality of $v_1$ with probability $1-\epsilon$.

We begin by deriving necessary conditions.

\begin{theorem*}\label{ThmHubNec}
The following conditions are necessary to ensure that $v_1$ is the persistent central node, where $C, C'$, and $C''$ are appropriate constants.
\begin{itemize}
\item[(i)] Preferential attachment: The hub size $k$ is at least $C\log(1/ \epsilon)$.
\item[(ii)] Uniform attachment: The hub size $k$ is at least $C'\frac{\log(1/\epsilon)}{\log \log (1/\epsilon)}$.
\item[(iii)] $d$-regular tree diffusion: Suppose $d \ge 3$. The radius $r$ is at least $C'' \log \log (1/\epsilon)$.
\end{itemize}
\end{theorem*}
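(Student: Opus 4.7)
The plan is to prove each part by exhibiting an explicit bad event $B$ of probability $\ge \epsilon$ on which $v_1$ fails to be the persistent centroid. By Lemma~\ref{lemma: shah}(i), it suffices to arrange for one subtree $T_i := (T_n, v_1)_{v_i \downarrow}$ to eventually contain more than $n/2$ nodes, since this forces $v_1$ out of the centroid set from that point onward. In each model the analysis reduces to a Beta or Dirichlet tail estimate for a P\'olya urn of the kind already set up in Lemma~\ref{lemma: cafe 12}.

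For part (i), track the single subtree $T_2$. The degree-sum bipartition between $T_2$ and its complement evolves as a two-color P\'olya urn with replacement matrix $2I$ and initial configuration $(1,\,2k-1)$, so $|T_2(n)|/n \to \xi \sim \mathrm{Beta}(1/2,\,k-1/2)$ almost surely. A Stirling-based estimate of the Beta density on the window $[1/2,\,1/2+1/(2k)]$ yields $\mathbb{P}(\xi > 1/2) \ge c\cdot 2^{-k}/\sqrt{k}$, which exceeds $\epsilon$ unless $k = \Omega(\log(1/\epsilon))$. For part (iii), the analogous urn on uninfected-boundary sizes has replacement $(d-2)I$ and symmetric initial configuration $(d-1)^r \mathbf 1$, giving $|T_i(n)|/n \to \pi_i$ with $\pi \sim \mathrm{Dirichlet}(\alpha,\ldots,\alpha)$, where $\alpha = (d-1)^r/(d-2)$. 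The marginal $\pi_2 \sim \mathrm{Beta}(\alpha,(d-1)\alpha)$ concentrates at $1/d < 1/2$ for $d \ge 3$, and a large-deviation lower bound on its right tail gives $\mathbb{P}(\pi_2 > 1/2) \ge \exp(-C(d)(d-1)^r)$ up to polynomial prefactors, forcing $r = \Omega(\log\log(1/\epsilon))$.

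For part (ii), use an elementary event rather than the P\'olya urn limit, which would give the stronger bound $\log(1/\epsilon)$. Let $E_k := \{v_{k+2},\ldots,v_{2k+1} \text{ each attach specifically to } v_2\}$, whose probability is $\prod_{i=1}^k 1/(k+i) = k!/(2k)! \sim (e/(4k))^k$ by Stirling. On $E_k$, the subtree $T_2$ has size $k+1$ at time $n=2k+1$, so $\psi_{T_n}(v_1) = k+1 > n/2$ and $v_2$ is the unique centroid of $T_n$ with a strict one-vertex $\psi$-advantage over $v_1$. By the Markov property and the uniform-attachment P\'olya urn, the bipartition $(|T_2|,\,n-|T_2|)$ beyond time $2k+1$ starts at $(k+1,\,k)$ and its normalized version converges to $\mathrm{Beta}(k+1,k)$, whose median exceeds $1/2$; hence $v_1$ remains a non-centroid forever with conditional probability at least $1/2$ given $E_k$. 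Combining, $\mathbb{P}(v_1 \text{ not persistent centroid}) \gtrsim (e/(4k))^k$, which exceeds $\epsilon$ unless $k\log k = \Omega(\log(1/\epsilon))$, i.e.\ $k = \Omega(\log(1/\epsilon)/\log\log(1/\epsilon))$.

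The main technical hurdle is the sharp lower bound on the Beta right tail in part (iii): one must carefully track Stirling asymptotics of $B(\alpha,(d-1)\alpha)$ as $\alpha \to \infty$ and integrate the density over a narrow window just past $1/2$, so that the exponent $C(d)$ matches the one produced by the companion sufficient-condition argument up to universal constants (as needed for the tightness claim). Everything else is routine: almost-sure convergence of $|T_2(n)|/n$ to a limit strictly exceeding $1/2$ immediately rules out $v_1$'s persistence via Lemma~\ref{lemma: shah}(i), and in part (ii) the conditional persistence of non-centroid status for $v_1$ is immediate from the fact that the median of $\mathrm{Beta}(k+1,k)$ exceeds $1/2$.
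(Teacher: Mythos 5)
Your proposal is correct in all three parts, but it reaches the necessary conditions by a genuinely different mechanism than the paper. The paper conditions on an explicit event that makes $v_1$ and $v_2$ \emph{exchangeable}: in the attachment models the next $k-1$ arrivals all join $v_2$, producing a double star symmetric under swapping $v_1$ and $v_2$; in the diffusion model the next $(d-1)^r$ arrivals complete the $r$-th level of $(T,v_1)_{v_2\downarrow}$. On that event the (a.s.\ unique) persistent centroid is equally likely to be $v_1$ or $v_2$, so $v_1$ fails with conditional probability at least $1/2$, and the whole theorem reduces to an exact computation of the probability of the symmetrizing event---no urn limits or Beta asymptotics are needed. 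You instead run the P\'olya urn to its Beta/Dirichlet limit and lower-bound the probability that the limiting fraction of a single subtree exceeds $1/2$, which by Lemma~\ref{lemma: shah}(i) eventually forces $v_1$ out of the centroid set; your tail estimates are of the right order (e.g.\ $\mathbb{P}\bigl(\mathrm{Beta}(1/2,k-1/2)>1/2\bigr)\geq c\,2^{-k}/\sqrt{k}$, and $\mathbb{P}\bigl(\mathrm{Beta}(\alpha,(d-1)\alpha)>1/2\bigr)\geq e^{-c(d)\alpha}$ up to polynomial factors since $d^d/\bigl(2^d(d-1)^{d-1}\bigr)<1$ for $d\ge 3$), and your part (ii) is essentially the paper's event with a one-vertex asymmetry, closed off by a Beta-median argument instead of exchangeability. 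Two minor points: in part (ii), $\xi>1/2$ only guarantees that $v_1$ is a non-centroid from some finite time onward, not literally ``forever'' from time $2k+1$---but non-persistence is all you need, so the conclusion stands; and the deferred Beta tail lower bounds are genuinely the technical heart of your route, whereas the paper's symmetrization sidesteps them entirely. The trade-off is that your approach is more computational but exhibits the typical way $v_1$ loses (one subtree drifting past half the mass), while the paper's argument is shorter and exact.
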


\begin{proof}
We begin by analyzing the preferential and uniform attachment models. Let $P_k$ denote the probability that the next $k-1$ vertices $\{v_{k+2}, \dots, v_{2k}\}$ all join vertex $v_2$. Since the graph will then be symmetric with respect to $v_1$ and $v_2$, the probability of $v_1$ not being the persistent centroid is at least $P_k/2$, which must in turn be less that $\epsilon$. This implies a bound on the required size of $k$. The value of $P_k$ and the corresponding bound on $k$ are developed in the following calculations.
\begin{enumerate}
\item \textbf{Preferential attachment:} We have
$$ P_k = \frac{1}{2k} \cdot \frac{2}{2k+2} \cdot \dots \cdot \frac{k-1}{4k-4} = \frac{(k-1)! (k-1)!}{2^{k-1} (2k-2)!} = \frac{1}{2^{k-1} {2k-2 \choose k-1}}.$$
Hence,
\begin{align*}
2\epsilon \geq P_k =  \frac{1}{2^{k-1} {2k-2 \choose k-1}} \geq \frac{1}{2^{k-1} 4^{k-1}} = \frac{1}{2^{3k-3}},
\end{align*}
using the fact that $\binom{2k-2}{k-1} \le 4^{k-1}$. Thus, a hub size of $k \ge C\log(1/ \epsilon)$ is necessary.
\item \textbf{Uniform attachment:} We have
$$P_k = \frac{1}{k+1} \cdot \frac{1}{k+2} \cdot \dots \cdot \frac{1}{2k-1} = \frac{k!}{(2k-1)!}.$$
Hence,
\begin{align*}
2\epsilon &\geq \frac{k!}{(2k-1)!} \geq \frac{k!}{(2k)!} \ge \frac{c\sqrt{k}(k/e)^k}{\sqrt{2k}(2k/e)^{2k}} = \frac{\tilde ce^k}{2^{2k}k^k},
\end{align*}
where $c$ and $\tilde c$ are suitable constants. Taking logarithms and simplifying, we obtain
$$\log(1/\epsilon) \leq k\log k + o(k \log k).$$
Thus, a hub size of $k \ge C'\frac{\log(1/\epsilon)}{\log \log (1/\epsilon)}$ is necessary.
\item \textbf{$d$-regular tree diffusion:} In a $d$-regular tree, we create an $r$-ball around vertex $v_1$ and derive bounds on the radius $r$ of the ball. Starting from the $r$-ball centered at $v_1$, we calculate the probability that vertices added in such a manner will make $v_1$ and $v_2$ symmetric and indistinguishable. To ensure this, the next $(d-1)^r$ vertices must be added to fill in the $r^{\text{th}}$ level in the subtree $(T, v_1)_{v_2\downarrow}$. This probability is equal to
$$P = \frac{(d-1)^r!}{\prod_{i=0}^{(d-1)^r-1} \Big(d(d-1)^r+ i(d-2)\Big)}.$$
Taking $\tau = \frac{d(d-1)^{r}}{d-2} \leq d^{r+1}$, we simplify this as
\begin{align*}
2\epsilon > P &= \frac{\left((d-1)^r\right)!}{(d-2)^{(d-1)^r}\prod_{i=0}^{(d-1)^r-1} \Big(\tau + i\Big)} \\
&\geq \frac{\left((d-1)^r\right)!}{(d-2)^{(d-1)^r}\prod_{i=0}^{(d-1)^r-1} \Big(d^{r+1} + i\Big)}\\
& = \frac{\left((d-1)^r\right)!(d^{r+1})!}{(d-2)^{(d-1)^r} \Big(d^{r+1}+ (d-1)^r - 1\Big)!}\\
&\ge \frac{1}{(d-2)^{(d-1)^r}\binom{d^{r+1}+ (d-1)^r}{(d-1)^r}}\\
&\geq \frac{1}{(d-2)^{(d-1)^r}2^{d^{r+1}+ (d-1)^r}}.
\end{align*}
Taking logarithms and simplifying, we then have
\begin{align*}
 (d-1)^r \log (d-2) + \Big(d^{r+1}+ (d-1)^r\Big) \log 2 \ge \log (1/2\epsilon).
\end{align*}
Since the left-hand side is $\Theta (d^{r+1})$, we obtain that a radius of size $r \ge C''\log \log (1/\epsilon)$ is necessary.
\end{enumerate}
\end{proof}

The next result provides sufficient conditions on the size of the initial hub ensuring persistence of the root node.

\begin{theorem*}\label{ThmHubSuff}
The following conditions are sufficient to ensure that $v_1$ is the persistent central node, where $\tilde{C}$ and $\tilde{C}'$ are appropriate constants:
\begin{itemize}
\item[(i)] For preferential and uniform attachment, the hub size $k$ satisfies $k \ge \tilde{C}\log (1/\epsilon)$.
\item[(ii)] For diffusion over a $d$-regular tree, with $d \ge 3$, the radius $r$ satisfies $r \ge \tilde{C}' \log \log (1/\epsilon)$.
\end{itemize}
\end{theorem*}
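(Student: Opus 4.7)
The plan is to use Theorem~\ref{ThmPersist} to reduce the problem to bounding the probability that $v_1$ fails to be a centroid of $T_n$ at some time $n$ exceeding the initial seed size. Since Theorem~\ref{ThmPersist} asserts almost-sure uniqueness of the persistent centroid, the event that $v_1$ remains a centroid for all sufficiently large $n$ implies, up to a null set, that $v_1$ is the unique persistent centroid. The two parts of the theorem are handled by different methods.

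For part~(i), the strategy mirrors that of the proofs of Lemma~\ref{lemma: finite} and Lemma~\ref{LemFiniteK}. By Lemma~\ref{lemma: main}, if some vertex $v \ne v_1$ becomes at least as central as $v_1$ at some time, then the random walk $(X_n^v, Y_n^v) := \bigl(|(T_n, v)_{v_1\downarrow}|, |(T_n, v_1)_{v\downarrow}|\bigr)$ must reach the diagonal. At time $k+1$ (immediately after the seed hub has formed), each of the $k$ hub vertices $v_2,\dots,v_{k+1}$ gives rise to a walk starting at $(k,1)$. Each later-born vertex $v_m$ (for $m \ge k+2$) joins as a leaf at time $m$, producing a walk starting at $(m-1,1)$. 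Applying Lemma~\ref{lemma: galashin} to each walk and taking a union bound,
\begin{align*}
\mathbb{P}(v_1 \text{ not persistent centroid}) \le k \cdot \frac{P(k)}{2^k} + \sum_{m \ge k+2} \frac{P(m-1)}{2^{m-1}} = O\!\left(\frac{k\,P(k)}{2^k}\right).
\end{align*}
Since $P$ is polynomial, setting this bound at most $\epsilon$ yields $k \ge \tilde{C}\log(1/\epsilon)$ for a suitable constant $\tilde{C}$.

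For part~(ii), the walks emanating from interior vertices of the initial $r$-ball begin with $Y$-coordinate larger than $1$, so Lemma~\ref{lemma: galashin} does not apply directly. Instead, I would control the sizes of the $d$ subtrees of $v_1$ via a martingale argument. Let $S_1(n),\dots,S_d(n)$ denote these subtree sizes; by Lemma~\ref{lemma: shah}, $v_1$ is a centroid of $T_n$ iff $\max_i S_i(n) \le n/2$. Define
\[
M_n^{(i)} := \frac{(d-2)\, S_i(n) + 1}{(d-2)(n-1) + d}.
\]
A direct computation using the diffusion transition probabilities shows that $M_n^{(i)}$ is a bounded martingale with increments $|M_{n+1}^{(i)} - M_n^{(i)}| \le C/n$. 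Symmetry of the initial ball gives $M_{n_0}^{(i)} = 1/d$ at $n_0 := |T_0|$, and the event that $v_1$ fails as centroid corresponds (up to a vanishing correction) to $\sup_n M_n^{(i)} > 1/2$ for some $i$. Azuma's inequality together with $\sum_{k \ge n_0} (C/k)^2 = O(1/n_0)$ then yields
\[
\mathbb{P}\!\left(\sup_{n \ge n_0}\bigl(M_n^{(i)} - 1/d\bigr) \ge 1/2 - 1/d\right) \le 2\exp(-c\, n_0),
\]
using $1/2 - 1/d \ge 1/6$ for $d \ge 3$. Union-bounding over the $d$ subtrees gives $\mathbb{P}(v_1 \text{ not persistent centroid}) \le 2d\exp(-c\, n_0)$; since $n_0 \ge (d-1)^r$, setting this at most $\epsilon$ produces $r \ge \tilde{C}'\log\log(1/\epsilon)$.

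The main technical hurdle will be verifying the martingale property and the $O(1/n)$ increment bound for $M_n^{(i)}$ in part~(ii), which is routine but requires some algebra. A secondary subtlety is that the argument must control the supremum $\sup_{n \ge n_0}M_n^{(i)}$ uniformly rather than just the limit $M_\infty^{(i)}$, which is precisely why Azuma's inequality is invoked rather than a direct calculation with the limiting Dirichlet marginal.
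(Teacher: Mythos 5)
Your part (i) follows the paper's union-bound strategy, but one step is wrong: you claim that the walk associated with a late-born vertex $v_m$ ($m \ge k+2$) starts at $(m-1,1)$. In fact $|(T_m, v_m)_{v_1\downarrow}| = m - |(T_m, v_1)_{u_1\downarrow}|$, where $u_1$ is the neighbor of $v_1$ on the path to $v_m$; this equals $m-1$ only when $v_m$ attaches directly to $v_1$, and in general it can be as small as $1$ (e.g.\ if $v_m$ attaches at the far end of a long branch hanging off $v_1$), in which case the walk is born on the diagonal and Lemma~\ref{lemma: galashin} yields nothing. The paper repairs exactly this point by comparing $v_m$ not to $v_1$ but to the \emph{current centroid}: if $v_m$ ever becomes a centroid it must become at least as central as the centroid of $T_{m-1}$, and Lemma~\ref{lemma: n/2} then guarantees that this walk starts at $(A,1)$ with $A \ge (m-1)/2$, producing the summable tail $\sum_m P(m/2)/2^{m/2}$. (An alternative fix is to work on the event that $v_1$ is still a centroid at time $m-1$, where the same $(m-1)/2$ bound holds, but that requires a first-failure-time decomposition; the centroid comparison is cleaner.) The hub vertices' starting point $(k,1)$ and the final conclusion $k \ge \tilde{C}\log(1/\epsilon)$ are otherwise as in the paper.

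Your part (ii) takes a genuinely different route from the paper, and it works. The paper simply asserts that the same union-bound calculation applies with $K = \Theta(d^r)$, which glosses over precisely the difficulty you identify: the walks for interior vertices of the $r$-ball do not start with second coordinate $1$, so Lemma~\ref{lemma: galashin} is not directly applicable to them. Your martingale argument sidesteps this entirely: the number of uninfected neighbors of the $i$-th branch of $v_1$ is $(d-2)S_i(n)+1$ out of $(d-2)n+2$ in total, so $M_n^{(i)}$ is the standard P\'olya-urn martingale with increments $O(1/n)$; the symmetric seed gives $M_{n_0}^{(i)} = 1/d$ exactly; and the centroid condition translates \emph{exactly} (no vanishing correction needed) since $S_i(n) > n/2 \iff M_n^{(i)} > 1/2$, using the characterization from Lemma~\ref{lemma: shah} that $v_1$ is a centroid iff $\max_i S_i(n) \le n/2$. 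The maximal Azuma inequality with $\sum_{k \ge n_0}(C/k)^2 = O(1/n_0)$ then gives the $\exp(-c n_0)$ bound and hence $r \ge \tilde{C}'\log\log(1/\epsilon)$. This is arguably a more complete proof of part (ii) than the paper's; what it buys is rigor at the cost of being specific to the regular-tree symmetry of the seed, whereas the paper's (terser) union-bound template is uniform across all three models.
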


\begin{proof}
In the case of preferential or uniform attachment, suppose we start with a hub of size $K$, so $\{v_2, \dots, v_{K+1}\}$ are all connected to vertex $v_1$. Let $\cF$ be the event that $v_1$ becomes the persistent centroid, and let $\cF^*$ be the event that $v_1$ is a tree centroid at all time points. Clearly, $\mathbb P(\cF) \geq \mathbb P(\cF^*)$. We will select the hub size to ensure that the latter probability is at least $1- \epsilon$. Define the events
\begin{align*}
\hat H_i = \{\text{$v_i$ becomes a centroid at some time step}\}.
\end{align*}
Then
\begin{equation*}
\big(\cF^*\big)^c = \cup_{i=2}^\infty \hat H_i.
\end{equation*}
Note that
$$\hat H_i \subseteq H_i := \{\text{$v_i$ becomes at least as central as $v_1$ at some time step}\},$$
and
$$\hat H_i \subseteq G_i := \{\text{$v_i$ becomes at least as central as the centroid of $T_{i-1}$ at some time step}\}.$$
Thus,
\begin{align*}
\mathbb P\Big(\big(\cF^*\big)^c\Big) \leq \mathbb P \Big(\big(\cup_{i=2}^{K+1}  H_i \big) \bigcup \big(\cup_{i = K+2}^\infty  G_i \big)\Big).
\end{align*}
Using the bound from Lemma~\ref{lemma: galashin} in Appendix~\ref{AppRW}, and for $K$ greater than an appropriate constant, we then have
\begin{align}
\label{EqnCocoPuff}
 P\Big(\big(\cF^*\big)^c\Big) &\leq K \cdot \frac{P(K)}{2^K} + \sum_{i=K+2}^\infty  \frac{P(i/2)}{2^{i/2}} \notag \\
 &\stackrel{(a)}\leq \frac{2^{K/2}}{2^K} + \sum_{i=K+2}^\infty  \frac{2^{i/4}}{2^{i/2}} \notag \\
 &= 2^{-K/2} + \frac{2^{-(K+2)/4}}{1 - 2^{-1/4}} \notag \\
 &\leq 5\cdot 2^{-K/4},
\end{align}
where in $(a)$, we have used the fact that for large enough $K$, 
$$P(K ) < KP(K) < 2^{K/2}.$$
We can make $5\cdot 2^{-K/4} < \epsilon$ by choosing $K \ge \tilde{C}\log(1/\epsilon)$ for a large enough constant $\tilde{C}$.

For diffusion over a $d$-regular tree, the same calculation~\eqref{EqnCocoPuff} holds, except now
\begin{equation*}
K = \frac{(d-1)^{r+1}-1}{d-2} = \Theta(d^r)
\end{equation*}
for the initial seed graph. Thus, $r \ge \tilde{C}'\log \log (1/\epsilon)$ is a sufficient condition for the size of the radius.
\end{proof}

\begin{remark}
Comparing the necessary and sufficient conditions in Theorems~\ref{ThmHubNec} and~\ref{ThmHubSuff}, we see that a threshold occurs at hub size $k = \Theta\left(\log(1/\epsilon)\right)$ in the case of the preferential attachment model, and at radius \mbox{$r = \Theta\left(\log\log(1/\epsilon)\right)$} in the case of a $d$-regular diffusion. However, the bounds on the hub size in the case of uniform attachment disagree by a factor of $\log\log(1/\epsilon)$. It is a topic of future work to determine the exact threshold in this case, which must lie between $\Omega\left(\frac{\log(1/\epsilon)}{\log \log(1/\epsilon)}\right)$ and $O\left(\log(1/\epsilon)\right)$ by our results.
\end{remark}

%%%%

\section{Discussion}
\label{SecDiscussion}

We have established the persistence of a unique centroid (or set of top $K$ central nodes) in three types of random growing trees: Uniform attachment, preferential attachment, and diffusion processes over $d$-regular trees. Furthermore, we have derived necessary and sufficient conditions for the size of the initial seed graph required to ensure that the first node is the persistent centroid in the network with probability $1-\epsilon$. A number of related open questions remain:
\begin{itemize}
\item[(i)] We believe that the results in this paper regarding persistence of the centroid should hold in more general preferential attachment models, where the probability of attaching to a node is proportional to a function $f$ of the vertex degree. In Galashin~\cite{Gal14}, it was shown that the degree-central node persists when $f$ is a convex function. Results concerning nonlinear P\'{o}lya urns indicate that for concave $f$, degree-centrality cannot persist~\cite{LarEtAl13}. However, the centroid persists when $f$ is either a linear or a constant function, and we conjecture that the persistence of the centroid holds for a larger class of functions, if not for all functions.

\item[(ii)] Our results and those from Galashin~\cite{Gal14} show that the top central node ``stabilizes" after a finite time, but we are unable to provide estimates on the expected time or the distribution of the time when stabilization occurs. This is particularly relevant for practical purposes, when one may wish to guarantee that the current centroid is the persistent centroid.

\item[(iii)] As mentioned in the remark after Theorem \ref{ThmHubSuff}, the problem of determining the hub-size threshold for the case of uniform attachment trees is still an open question. A related topic concerning degree centrality would be to provide necessary and sufficient conditions on the hub size in order to ensure degree centrality of $v_1$ (as in Section~\ref{SecHub}) in the convex preferential attachment model. It would be interesting to compare these conditions to the bounds required for the form of centrality studied in this paper.

\item[(iv)] In general, the degree-central node in a tree need not be the same as the centroid. For the preferential attachment model, one might ask whether the \emph{persistent} degree-central node is the same as the \emph{persistent} centroid with probability 1. It is tempting to think that such a result should hold; however, it probably does not. A heuristic argument is as follows: Consider a tree $T$ rooted at $v_1$ with neighbors $\{v_2, v_3, v_4,v_5\}$. Assume $v_2$ has a large number of children (say, $10^6$) and no grandchildren, and assume $(T, v_1)_{v_i \downarrow}$, for $i \in \{2,3,4\}$, is simply a line graph with, say, $10^{10}$ nodes. A preferential attachment process starting from such a tree would likely have $v_1$ as the persistent centroid, and $v_2$ as the persistent degree-central node. Since one can obtain $T$ with a finite probability starting from $v_1$, the persistent degree-central node cannot agree with the persistent centroid with probability 1. It would be interesting to study what additional constraints could ensure the agreement of both persistent nodes.

\item[(v)] Our results show that the top $K$ central nodes obtained according to the centrality measure $\psi$ stabilizes after a finite number of steps. However, a confidence set constructed according to $\psi$ may be sub-optimal in terms of the size of the set required as a function of the error probability $\epsilon$~\cite{BubEtal14, KhiLoh15}. It would be interesting to see whether other centrality measures such as those corresponding to the maximum likelihood estimator are also ``robust" in the sense that they produce a stable output after some finite time.

\item[(vi)] The problem of establishing persistence of centrality in non-trees (for example, in preferential or uniform attachment models where more than one node is added at each step) appears to be very challenging. It is not clear what notion of centrality, if any, would persist in such cases. Even for trees, the problems of establishing persistent centrality in alternative models such as preferential or uniform attachment with choice~\cite{Mal14, Has14}, or random tree branching processes, are worth  considering.

\end{itemize}

\section*{Acknowledgments}

The authors thank Elchanan Mossel for suggesting the counterexample to the conjecture that the persistent degree-central node agrees with the persistent centroid in the general preferential attachment model.

\appendix

\section{Weighted 2-dimensional random walks}
\label{AppRW}

In this section, we consider a class of random walks on $\mathbb N \times \mathbb N$. From position $(i,j)$, at each time step the random walk can move either  one step up with probability $U(i,j)$, or one step to the right with probability $R(i,j)$. The probabilities of these movements depend on $(i,j)$ according to
\begin{equation*}
R(i, j) \propto \alpha i + \beta, \quad \text{ and } \quad U(i,j) \propto \alpha j + \beta,
\end{equation*}
for some $\alpha > 0$ and $\alpha + \beta \geq 0$.
Our next lemma pertains to such random walks:

\begin{lemma}\label{lemma: galashin}%Lemma%
Consider a 2-dimensional random walk on the $\mathbb N \times \mathbb N$ lattice, where the location of the walk at time $n$ is denoted by $W_n$, and the probabilities of movement are given by
\begin{align*}
\mathbb P \big(W_{n+1} = (i+1, j) | W_n = (i,j)\big) &= R(i,j) \propto \alpha i + \beta, \quad \text{and} \\
\mathbb P\big(W_{n+1} = (i, j+1) | W_n = (i,j)\big) &= U(i,j) \propto \alpha j + \beta.
\end{align*}
For $A > 2$, let $f(A)$ be the probability that the random walk reaches the diagonal at some future time when it starts at $W_0 = (A, 1)$. Then there exists a fixed polynomial $P$ such that
\begin{equation*}
f(A) \leq \frac{P(A)}{2^A},
\end{equation*}
and $\frac{P(A)}{2^A}$ is monotonically decreasing for sufficiently large $A$.
\end{lemma}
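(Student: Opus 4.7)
The plan is to decompose $f(A)$ according to the first time the walk hits the diagonal. Let $f(A,m)$ denote the probability that the walk starting at $(A,1)$ first reaches the diagonal at the point $(m,m)$, so that
\begin{equation*}
f(A) \;=\; \sum_{m=A}^{\infty} f(A,m).
\end{equation*}
Each contribution $f(A,m)$ splits as a product of (i) the number $\Theta_m$ of lattice paths from $(A,1)$ to $(m,m)$ using right/up steps that stay strictly below the diagonal except at the endpoint, and (ii) the probability $p_m$ of any one such path. The key structural observation is that $p_m$ does not depend on which path is chosen: along any path from $(A,1)$ to $(m,m)$, the right steps occur exactly once at each first coordinate $i = A, A+1, \dots, m-1$, and the up steps exactly once at each second coordinate $j = 1, 2, \dots, m-1$, while the denominators $\alpha(i+j) + 2\beta$ cycle through the same values $\alpha(A+1) + 2\beta, \dots, \alpha(2m-1)+2\beta$ regardless of the path. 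This is exactly the Pólya-urn invariance, and it yields the closed form
\begin{equation*}
p_m \;=\; \frac{\prod_{i=A}^{m-1}(\alpha i + \beta)\,\prod_{j=1}^{m-1}(\alpha j + \beta)}{\prod_{k=A+1}^{2m-1}(\alpha k + 2\beta)}.
\end{equation*}

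Next I would count $\Theta_m$ using the ballot / cycle lemma, which gives
\begin{equation*}
\Theta_m \;=\; \frac{(2m-A-2)!\,(A-1)}{(m-A)!\,(m-1)!}.
\end{equation*}
Rewriting this in terms of a central binomial coefficient and telescoping the ratio $(m-A+1)\cdots(m-1)/[(2m-A-1)\cdots(2m-2)]$ extracts a factor of $2^{-A}$, yielding the bound $\Theta_m \le C_1 \binom{2m}{m} A \,m^{-1} 2^{-A}$. To handle $p_m$, I would separate the product into the ``balanced'' portion with indices $i,j \ge A$, which pairs numerators $\alpha i + \beta$ and $\alpha j + \beta$ against denominators $\alpha(2i) + 2\beta$ and $\alpha(2i+1) + 2\beta$ to produce a factor of $2^{-2(m-A)}$, and a residual ``boundary'' portion indexed by $j = 1, \dots, A-1$. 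Bounding $\beta/\alpha$ by $u := \lceil \beta/\alpha \rceil$ and applying Stirling's approximation to the resulting ratio of factorials shows that the boundary portion contributes at most $\widetilde{P}(A)\,2^{-2A}$ for a polynomial $\widetilde{P}$ of degree $u+1$. Together this gives
\begin{equation*}
p_m \;\le\; \frac{\widetilde{P}(A)}{2^{2m}}.
\end{equation*}

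Combining the two bounds and using $\binom{2m}{m} \le 2^{2m}/\sqrt{\pi m}$,
\begin{equation*}
f(A,m) \;=\; \Theta_m \, p_m \;\le\; \frac{C_2\, A\, \widetilde{P}(A)}{2^{A}\, m^{3/2}},
\end{equation*}
so summing the convergent series $\sum_{m\ge A} m^{-3/2}$ yields $f(A) \le P(A)/2^A$ with $P$ a fixed polynomial of degree $u+2$. Monotonicity of $P(A)/2^A$ for large $A$ is immediate since eventually the exponential dominates any polynomial factor.

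The main obstacle will be the careful bookkeeping in handling the $\beta/\alpha$ offsets when $\beta \neq 0$: the elegant pairing between numerator factors $\alpha i + \beta$ and denominator factors $\alpha(2i) + 2\beta$ that produces the clean $2^{-A}$ saving holds exactly when $\beta = 0$ but becomes an inequality requiring the floor/ceiling trick for general $\beta$. The Stirling estimate on $(A-1+u)!(A+2u)!/(u!(2A-1+2u)!)$ must be pushed through carefully to control the constant and to verify that the $\bigl(\tfrac{x}{x-a}\bigr)^x$ factors coming from $\bigl(1 + \tfrac{u}{2A+O(1)}\bigr)^{A+O(u)}$ grow at most polynomially in $A$. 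Aside from this technical step, everything else is a straightforward composition of the Pólya-urn invariance, the ballot formula, and a central binomial estimate.
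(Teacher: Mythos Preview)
Your proposal is correct and follows essentially the same architecture as the paper: decompose $f(A)=\sum_{m\ge A}f(A,m)$, invoke the P\'olya-urn path-invariance to write $p_m$ in closed form, count paths via the ballot formula, and finish with Stirling-type estimates. The only technical difference is that the paper rewrites $p_m$ through Gamma functions and applies Stirling directly to the combined product $\Theta_m\,p_m$, obtaining $f(A,m)\le P(A)/(2^A m^2)$, whereas you bound $\Theta_m$ and $p_m$ separately (extracting $2^{-A}$ from the path count and $2^{-2m}$ from the probability via the balanced/boundary split), which yields the slightly weaker but still summable $f(A,m)\le P(A)/(2^A m^{3/2})$; both routes close the argument.
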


\begin{proof}
Let $f(A,m)$ be the probability that the random walk lies entirely below the diagonal before reaching $(m,m)$ on the diagonal. Clearly,
\begin{equation*}
f(A) = \sum_{m = A}^\infty f(A, m).
\end{equation*}
We will now bound $f(A,m)$ for $m \geq A$. Let $\Theta((A,B) \to (m,m))$ be the number of paths from $(A, B)$ to $(m,m)$, such that every point on the path lies strictly below the diagonal, except for the endpoint $(m,m)$. Using the reflection principle, Lemma 2 of Galashin~\cite{Gal14}  shows that the total number of such paths from $(A, B)$ to $(m,m)$ is given by the expression
\begin{equation*}
\Theta((A,B) \to (m,m)) = \frac{(2m-1-A-B)!(A-B)}{(m-A)!(m-B)!}.
\end{equation*}
Substituting $B=1$, we have
$$\Theta((A,1) \to (m,m)) = \frac{(2m-2-A)!(A-1)}{(m-A)!(m-1)!} =\frac{\Gamma(2m-A-1)(A-1)}{\Gamma(m+1-A)\Gamma(m)}.$$
Furthermore, every path from $(A, B)$ to $(m,m)$ has the same probability $p((A,B) \to (m,m))$, and
\begin{align*}
p((A,B) \to (m,m)) &= \frac{\prod_{i = A}^{m-1}(\alpha i + \beta) \prod_{j=B}^{m-1} (\alpha j + \beta)}{\prod_{k = A+B}^{2m-1} (\alpha k + 2\beta)}\\
&= \frac{\alpha^{m-A} \prod_{i = A}^{m-1}(i + \beta/\alpha) \times \alpha^{m-B}\prod_{j=B}^{m-1} (j + \beta/\alpha)}{\alpha^{2m-A-B} \prod_{k = A+B}^{2m-1} (k + 2\beta/\alpha)}\\
&= \frac{\prod_{i = A}^{m-1}(i + \beta/\alpha) \prod_{j=B}^{m-1} (j + \beta/\alpha)}{\prod_{k = A+B}^{2m-1} (k + 2\beta/\alpha)}.
\end{align*}
 
Substituting $B=1$ gives
\begin{align*}
p((A,1) \to (m,m)) &=  \frac{\prod_{i = A}^{m-1}(i + \beta/\alpha) \prod_{j=1}^{m-1} (j + \beta/\alpha)}{\prod_{k = A+1}^{2m-1} (k + 2\beta/\alpha)}\\
&= \frac{\frac{\Gamma(m+\beta/\alpha)}{\Gamma(A+\beta/\alpha)} \frac{\Gamma(m+\beta/\alpha)}{\Gamma(1+\beta/\alpha)}}{\frac{\Gamma(2m + 2\beta/\alpha)}{\Gamma(A+1+2\beta/\alpha)}}\\
&= \frac{\Gamma(A+1+2\beta/\alpha)}{\Gamma(A+\beta/\alpha)\Gamma(1+\beta/\alpha)} \cdot \frac{\Gamma(m+\beta/\alpha)\Gamma(m+\beta/\alpha)}{\Gamma(2m + 2\beta/\alpha)}\\
& \le \frac{\Gamma(A + 1 + 2 \lceil\beta/\alpha\rceil)}{\Gamma(A + \lceil \beta/\alpha\rceil - 1) \Gamma(1+\beta/\alpha)} \cdot \frac{\Gamma(m+\beta/\alpha)\Gamma(m+\beta/\alpha)}{\Gamma(2m + 2\beta/\alpha)}\\
&\leq P(A) \cdot \frac{\Gamma^2(m+\beta/\alpha)}{\Gamma(2m + 2\beta/\alpha)},
\end{align*}
where $P(A)$ is a fixed polynomial with degree at most $\lceil \beta/\alpha \rceil + 2$. In the rest of this proof, we continue using $P(A)$ to denote a fixed polynomial, although its exact expression may change from line to line. Since
$$f(A, m) = \Theta((A,1) \to (m,m)) \cdot p((A,1) \to (m,m)),$$
we have the bound
\begin{align*}
f(A,m) &\leq P(A) \frac{\Gamma(2m-A-1)}{\Gamma(m+1-A)\Gamma(m)} \frac{\Gamma^2(m+\beta/\alpha)}{\Gamma(2m + 2\beta/\alpha)},
\end{align*}
for some polynomial $P(A)$.
We now use Stirling's bound, which says that for all $z > 0$, the value of $\Gamma(z)$ lies with a constant factor of $\frac{1}{\sqrt z} \left(\frac{z}{e}\right)^z$:
$$\Gamma(z) \sim \frac{1}{\sqrt z} \left(\frac{z}{e}\right)^z.$$
Substituting and modifying $P(A)$ as convenient, we then have
\begin{align*}
f(A,m) &\leq P(A) \frac{(2m-A-1)^{2m-A-1-1/2} (m+\beta/\alpha)^{2(m+\beta/\alpha-1/2)}}{(m+1-A)^{m+1-A-1/2} m^{m-1/2} (2m + 2\beta/\alpha)^{2m + 2\beta/\alpha -1/2}}\\
&= P(A) \frac{(2m)^{2m-A-3/2}m^{2m+2\beta/\alpha-1}}{m^{m-A+1/2} m^{m-1/2}(2m)^{2m+2\beta/\alpha-1/2}} \times \frac{(1 - \frac{A+1}{2m})^{2m-A-3/2} (1+ \beta/m\alpha)^{2m+2\beta/\alpha-1}}{(1- \frac{A-1}{m})^{m-A+1/2} (1+\beta/m\alpha)^{2m+2\beta/\alpha-1/2}  }\\
&= \frac{P(A)}{2^A m^2} \times \frac{(1 - \frac{A+1}{2m})^{2m-A-3/2}}{(1- \frac{A-1}{m})^{m-A+1/2} (1+\beta/m\alpha)^{1/2}  }\\
&\leq  \frac{P(A)}{2^A m^2} \times \frac{(1 - \frac{A+1}{2m})^{2m-A-3/2}}{(1- \frac{A-1}{m})^{m-A+1/2}}\\
&=  \frac{P(A)}{2^A m^2} \times \left(\frac{(1 - \frac{A+1}{2m})}{(1- \frac{A-1}{m})} \right)^{m-A+1/2} \left(1 - \frac{A+1}{2m}\right)^{m-2}\\
&=  \frac{P(A)}{2^A m^2} \times \left( 1 + \frac{A-3}{2m-2A+2} \right)^{m-A+1/2}\left(1 - \frac{A+1}{2m}\right)^{m-2}\\
&\stackrel{(a)}\leq  \frac{P(A)}{2^A m^2} \times \exp \left(  \frac{A-3}{(2m-2A+2)}(m-A+1/2) - \frac{(A+1)}{2m} (m-2)\right)\\
&\leq \frac{P(A)}{2^A m^2} \times \exp \left( \frac{A-3}{2} - \frac{A+1}{2} + \frac{A+1}{m}\right)\\
&\leq  \frac{P(A)}{2^A m^2} \times \exp \left( -2+2\right) \\
&= \frac{P(A)}{2^A m^2},
\end{align*}
where in $(a)$, we have used the fact that for all $x\in \mathbb R$, we have $1+x \leq e^x$. Finally, noting that $\sum_{m=1}^\infty \frac{1}{m^2} < \infty$, we conclude that
\begin{equation*}
f(A) = \sum_{m=A}^\infty f(A, m) \leq \frac{P(A)}{2^A},
\end{equation*}
for a fixed polynomial $P$. Without loss of generality, we may choose $P(A)$ to be a monomial with a positive coefficient, so $\frac{P(A)}{2^A}$ is clearly monotonically decreasing for large enough $A$.
\end{proof}

\bibliography{refs}

\end{document}